\def\Lines{\mathcal{L}}
\DeclareMathOperator{\GF}{GF}
\DeclareMathOperator{\impcoll}{L^\infty}
\def\LineOn(#1,#2){\overline{{#1},{#2}\rule{0em}{1,5ex}}}
\def\penc{{\bf p}}
\def\starof{\mathrm{S}}
\def\topof{\mathrm{T}}
\def\lines{{\cal L}}
\def\peki{{\mathcal{P}}}
\def\AffineSpSymb{\mathbf{A}}
\def\AfSpace(#1){\ensuremath{\AffineSpSymb(#1)}}
\def\PencSpace(#1,#2){{\bf P}_{#2}({#1})}
\def\SpineSp(#1,#2,#3,#4){\ensuremath{\AffineSpSymb_{#1, #2}(#3, #4)}}
\def\fixproj{\ensuremath{\goth P}}
\def\fixout{\ensuremath{\goth D}}
\def\fixoutf(#1,#2){\mathord{\fixout_{#1}(#2)}}
\def\adjac{\mathrel{\sim}}
\def\impadjac{{\adjac^\infty}}
\def\linesout{{\Lines_{\W}}}
\def\pointsout{{S_{\W}}}
\def\indf(#1,#2){\ind_{#1}(#2)}
\def\fixind{{\indf(\fixproj,\W)}}
\def\pplanes{\mathcal{H}}
\let\polarity\pi
\def\A{{\mathfrak A}}
\def\M{{\mathfrak M}}
\def\W{{\cal W}}
\def\rlines{{\mathcal K}}
\def\pps{{\mathfrak G}}
\def\ppslines{{\mathcal G}}
\def\pclique{{\mathcal S}}
\def\stardir(#1){\pclique_\parallel(#1)}
\def\stardirin(#1,#2){\pclique_\parallel^{#2}(#1)}
\def\uzg{\mathrel{\sigma}}
\newenvironment{cmath}{%
  \par
  \smallskip
  \centering
  $
}{%
  $
  \par
  \smallskip
  \csname @endpetrue\endcsname
}
\newenvironment{ctext}{%
  \par
  \smallskip
  \centering
}{%
 \par
 \smallskip
 \csname @endpetrue\endcsname
}
\begin{document}

\title[The complement of a point subset...]{The complement of a point subset in a projective space and a Grassmann space}

\author{K. Petelczyc and M. \.Zynel}


\maketitle

\begin{abstract}
  In a projective space we fix some set of points, a horizon, and investigate the
  complement of that horizon. 
  We prove, under some assumptions on the size of lines, that the ambient
  projective space, together with its horizon, both can  be recovered in that
  complement.
  Then we apply this result to show something similar for Grassmann spaces.
\end{abstract}

\begin{flushleft}\small
  Mathematics Subject Classification (2010): 51A15, 51A45.\\
  Keywords: projective space, affine space, Grassmann space, slit space, complement.
\end{flushleft}


\section{Introduction}

\subsection{Our goal}

Two definitions of an affine space are known: it is a projective space with one
of its hyperplanes removed, or it is a projective space with one of its
hyperplanes distinguished. It is also known that these two definitions are
equivalent, since the removed hyperplane can be recovered in terms of so
obtained affine space. A general question arises: how big the remaining fragment
of a projective space must be so as the surrounding space can be recovered in
terms of the internal geometry of this fragment? Analogous question could be
asked in the context of projective Grassmannians. In this paper we propose a
solution involving parallelism, imitating the affine parallelism, valid under
certain assumptions on the size of improper parts of lines.  Our solution is
just {\emph a} solution: there are fragments of projective spaces (e.g.  ruled quadrics)
which do not satisfy our requirements and from which the surrounding spaces are
recoverable, but with completely different methods.

\subsection{Motivations and references}

In this paper we deal with a point-line space $\A$ where a subset $\W$ of
points, a horizon, and those lines that are entirely contained in $\W$ are
removed. We call such a reduct of $\A$ the complement of $\W$ in $\A$ and denote
it by $\fixoutf(\A,\W)$. 
As for which lines to remove from $\A$ our approach is not unique and
there are different complements considered in the literature, e.g.\ in \cite{sulima},
where all the lines that meet $\W$ are deleted.

The most classic and vivid example of a complement $\fixoutf(\A,\W)$, aforementioned at the beginning, 
is an affine space i.e.\ the
complement of a hyperplane $\W$ in a projective space $\A$.   
There are more examples where removing a hyperplane is also successful and where in result
we get some affine geometries.
In a polar space deleting its geometric hyperplane yields an affine polar space 
(cf. \cite{cohenshult}, \cite{afpolar}, \cite{csaffpolar}).
In a Grassmann space, in other words, in a space of pencils, this construction produces an 
affine Grassmannian (cf. \cite{cuypers}).
When $\W$ is any subspace, not necessarily a hyperplane, in a projective space $\A$, then
so called slit space arises as the complement $\fixoutf(\A,\W)$
possessing both projective and affine properties (cf. \cite{KM67}, \cite{KP70}).

In \cite{kradzisz} (see also \cite{kradzisz2}) there is introduced an axiom
system of semiaffine partial linear spaces (SAPLS in short) that are,
without going into details, weak geometries with parallelism. It is astonishing
that the set of directions of lines in a SAPLS form a subspace $\W$ of some
projective space $\A$ so, we can treat this SAPLS as the complement 
$\fixoutf(\A,\W)$. More precisely, it is proved in \cite{kradzisz}, that the
class of semiaffine linear spaces coincides with the class of forementioned slit
spaces. In other words, semiaffine linear spaces are slit spaces with
parallelism that is not necessarily reflexive.

Another remarkable example is a spine space (cf. \cite{spinesp},
\cite{autspine}), that emerges from a Grassmann space over a vector space by
taking only those points which as $k$-dimensional vector subspaces meet a fixed
vector subspace $W$ in a fixed dimension $m$. In case $m=0$ and $k$ is the
codimension of $W$ we get a pretty well known space of linear complements (cf.
\cite{lincomp} and also \cite{blunck-havlicek}).

Now we turn back to our major question: is it possible to recover the underlying
space $\A$ and the horizon $\W$ in the complement $\fixoutf(\A,\W)$? This is not
a completely new question and there are some papers devoted to such recovery
problem.
In \cite{slitgras} projective Grassmannians are successfully recovered from
complements of  their Grassmann substructures.
The concept of two-hole slit space is introduced in \cite{sulima}.  It is a
point-line space whose point set is the complement of the set of points of two
fixed complementary subspaces, not hyperplanes, in a projective space and the line set is the set
of all those lines which do not intersect any of these two subspaces.
This is not exactly what we have used to call a complement as lines are taken differently. 
Nevertheless,
using very similar methods to ours the recovery problem has been solved here
in incidence geometry settings. It has been also solved 
in chain geometry associated with a linear group approach. 
In \cite{partaffine} a partial linear space  with parallelism
embeddable in an affine space with the same point set is studied. On the other hand, it can be
considered as an affine space with some lines deleted. As it was mentioned, an affine space
is a complement of a projective space, and thus there is a correspondence between \cite{partaffine} and
our research. To restore missing lines (and preserve parallelism) the authors introduce some additional
combinatorial condition. Namely, all investigations in \cite{partaffine} are done under assumption,
that on every line $k$, for any point $p\notin k$,
there is more points collinear with $p$ than those not collinear with $p$.
Likewise, we require that on every line of $\fixoutf(\A,\W)$ there is less points of $\W$
than the other. 

\subsection{Results}

The purpose of this paper is to give an answer to the above question for projective spaces
and Grassmann spaces where a set of points, under some restrictions, has been removed.
In the first part we focus on a projective space as $\A$. Purely in terms of
the complement of $\W$ in $\A$ we reconstruct the removed points and lines as well
as the relevant incidence. An affirmative answer to our question is given in Theorem~\ref{thm:main-proj}.

In the second part we investigate a Grassmann space as $\A$.
It is known that every maximal strong subspace of a Grassmann space is a projective space.
This, together with some additional assumptions, lets us apply the result obtained for projective spaces 
to recover the whole $\A$ and its horizon $\W$ in terms of the complement $\fixoutf(\A,\W)$.
That way another affirmative answer to our question is formulated in
Theorem~\ref{thm:main-grass}.

At the end we give some examples where our results can be applied or not.


\section{Generalities}

A point-line structure $\A=\struct{S, \lines}$, where the elements of $S$ are
called \emph{points}, the elements of $\lines$ are called \emph{lines}, and
where $\lines\subset2^S$, is said to be a \emph{partial linear space}, or 
a \emph{point-line space}, if two
distinct lines share at most one point and every line is of size (cardinality) at least 2 (cf. \cite{cohen}).
The incidence relation between points and lines is basically the membership
relation $\in$. 

For distinct points $a, b\in S$ we say that they are \emph{adjacent} (\emph{collinear}) and write
$a\adjac b$ if there is a line in $\lines$ through $a, b$.
The line through distinct points $a, b$ will be written as  $\LineOn(a, b)$.
For distinct lines $k, l\in\lines$ we say that they are \emph{adjacent} and
write $k\adjac l$ if they intersect in a point from  $S$.
We say that three pairwise distinct points $a_1, a_2, a_3\in S$ (or lines $k_1, k_2, k_3\in\lines$)
form a \emph{triangle} in $\A$ if they are pairwise adjacent and not collinear (or not concurrent
respectively), which we write as $\triangle(a_1, a_2, a_3)$
(or $\triangle(k_1, k_2, k_3)$ respectively).
The points $a_1, a_2, a_3$ are called vertices and the lines $k_1, k_2, k_3$
are called sides of the triangle.
A \emph{subspace} of $\A$ is any set $X\subseteq S$ with the property that
every line which shares with $X$ two or more points is entirely contained in $X$.
We say that a subspace $X$ of $\A$ is \emph{strong} if any two points in $X$ are
collinear. 
If $S$ is strong then $\A$ is said to be a \emph{linear space}.


\subsection{Complements}

Let us fix a subset $\W\subset S$.
By the \emph{complement of\/ $\W$ in $\A$} we mean the structure
  $$\fixoutf(\A,\W) := \struct{\pointsout, \linesout},$$
where
  $$\pointsout := S\setminus\W \qquad\text{and}\qquad
     \linesout := \{ k\in\lines\colon k\nsubseteq\W\}.$$
Here, the incidence relation is again $\in$, inherited from $\A$, but
limited to the new point set and line set. It should not lead to confusion,
as it will be clear from context which incidence is which.
Following a standard convention we call the points and lines of the
complement $\fixoutf(\A,\W)$ \emph{proper}, and those points and lines
of $\A$ that are not in $\pointsout$, $\linesout$ respectively are said to be 
\emph{improper}.
The set $\W$ will be called the \emph{horizon of\/ $\fixoutf(\A,\W)$}.
Among proper lines of $\A$ we distinguish those which intersect the horizon $\W$
and call them \emph{affine}.
A triangle in $\A$ is said to be \emph{proper} if all its vertices and
sides are proper.

Let us introduce, quite critical for our study, the notion of
\emph{the index of\/ $\W$ in $\A$}.
It will be denoted by $\indf(\A,\W)$ and
\begin{ctext}
  $\indf(\A,\W)=\lambda$ iff there exist a line with $\lambda$ improper points\\
  and every line either has at most $\lambda$ improper points or it is contained in $\W$.
\end{ctext}
For $\indf(\A,\W) = 0$ we get $\fixoutf(\A,\W)=\A$. We assume in the sequel that
\begin{equation}\label{eq:non-zero-ind}
  0 < \indf(\A,\W) < \infty
\end{equation}
and $\indf(\A,\W)$ is a well defined integer.
There is one more caveat yet. The complement $\fixoutf(\A,\W)$ is not a partial linear
space in general as there could be one-element sets in $\linesout$.
This defect is ruled out by the assumption that
\begin{equation}\label{eq:size-of-line}
  \text{\itshape every line of\/ $\A$ has size at least\/ $2\indf(\A,\W)+2$},
\end{equation}
which is obviously too strong for this specific purpose but will become
essential later.
Now, the very first observation.

\begin{fact}\label{fact:basic}\strut
  \begin{sentences}
  \item\label{fact:basic:proper}
    Every line through a proper point is proper.
  \item\label{fact:basic:min}
    There are at least $\fixind +2$ proper points on a proper line.
  \item\label{fact:basic:notall}
    $\W \neq S$.
  \end{sentences}
\end{fact}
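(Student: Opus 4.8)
The plan is to dispatch the three claims in turn, since each is a short unwinding of the definitions together with the two standing hypotheses \eqref{eq:non-zero-ind} and \eqref{eq:size-of-line}; throughout I abbreviate $\lambda := \indf(\A,\W)$. Claim~\ref{fact:basic:proper} follows at once from the definition of $\linesout$: if a line $k$ passes through a proper point $p$, then $p \in k$ and $p \notin \W$, so $k \nsubseteq \W$, which is precisely the condition $k \in \linesout$; hence $k$ is proper, and no further hypothesis is needed.

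For~\ref{fact:basic:min} the idea is to confront the two bounds that apply to a proper line $k$. Being proper, $k$ is not contained in $\W$, so the defining property of the index forces $k$ to carry at most $\lambda$ improper points. At the same time, \eqref{eq:size-of-line} guarantees that $k$ has at least $2\lambda + 2$ points in total. Subtracting the improper points from the total leaves at least $(2\lambda + 2) - \lambda = \lambda + 2$ proper points on $k$, which is the asserted bound.

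For~\ref{fact:basic:notall} I would exhibit a single proper point. Since \eqref{eq:non-zero-ind} makes $\lambda$ a positive finite integer, the definition of the index supplies a line $k_0$ with exactly $\lambda$ improper points. By \eqref{eq:size-of-line} this $k_0$ has at least $2\lambda + 2 > \lambda$ points, hence at least one point outside $\W$; thus $\pointsout \neq \emptyset$ and $\W \neq S$. Alternatively, such a $k_0$ is itself proper, so~\ref{fact:basic:min} already endows it with $\lambda + 2$ proper points.

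There is no genuine obstacle in this proof: everything rests on the single arithmetic step $2\lambda + 2 - \lambda = \lambda + 2$. The only point worth flagging is the division of labour between the hypotheses --- the index condition caps the improper points on a proper line from above, while \eqref{eq:size-of-line} bounds the total from below, and it is exactly the slack $\lambda + 2$ between them that powers both~\ref{fact:basic:min} and~\ref{fact:basic:notall}.
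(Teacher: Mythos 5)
Your proof is correct, and it is exactly the intended argument: the paper states this Fact without proof as an immediate consequence of the definitions, and your unwinding --- (i) from the definition of $\linesout$, (ii) and (iii) from playing the index bound (at most $\fixind$ improper points on any line not contained in $\W$) against the size bound \eqref{eq:size-of-line} --- is the reasoning the authors leave implicit. Nothing is missing; in particular your observation that the witnessing line for the index cannot lie inside $\W$ (so it yields a proper point for (iii)) correctly handles the only subtlety.
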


The consequence of \ref{fact:basic}\eqref{fact:basic:min} is that
there are at least 3 proper points on every line of the complement
$\fixoutf(\A,\W)$, thus it is a partial linear space.


\subsection{Grassmann spaces}

Let $V$ be a vector space of dimension $n$ with $3\le n<\infty$. The set of all
subspaces of $V$ will be written as $\Sub(V)$ and the set of all $k$-dimensional
subspaces (or $k$-subspaces in short) as $\Sub_k(V)$. The most basic concept for
us here is a \emph{$k$-pencil} that is the set of the form
  $$\penc(H, B) := \{ U\in\Sub_k(V)\colon H\subset U\subset B\},$$
where $H\in\Sub_{k-1}(V)$, $B\in\Sub_{k+1}(V)$, and $H\subset B$.
The family of all such $k$-pencils will be denoted by $\peki_k(V)$.
We consider a \emph{Grassmann space} (also known as a \emph{space of pencils})
  $$\M = \PencSpace(V, k) = \struct{\Sub_k(V), \peki_k(V)},$$
a point-line space with $k$-subspaces of $V$ as points and $k$-pencils as lines
(see \cite{polargras}, \cite{slitgras} for a more general definition, see also \cite{mark}).
For $0 < k < n$ it is a partial linear space. For $k=1$ and
$k=n-1$ it is a projective space. So we assume that
  $$1< k < n-1.$$

It is known that there are two classes of maximal strong subspaces in $\M$:
\emph{stars} of the form
  $$\starof(H) = [H)_k = \{U\in\Sub_k(V)\colon H\subset U\},$$
where $H\in\Sub_{k-1}(V)$, and \emph{tops} of the form
  $$\topof(B) = (B]_k = \{U\in\Sub_k(V)\colon U\subset B\},$$
where $B\in\Sub_{k+1}(V)$.
Although non-maximal stars $[H, Y]_k$ and non-maximal tops $[Z, B]_k$, for
some $Y, Z\in\Sub(V)$, make sense but in this paper when we say `a star' or `a top'
we mean a maximal strong subspace.
It is trivial that every line, a $k$-pencil $p=\penc(H, B)$, of $\M$ can be 
uniquely extended to the star $\starof(p):=\starof(H)$ and to the top 
$\topof(p):=\topof(B)$.

Numerous intrinsics properties of Grassmann spaces are very well known  or can
be easily verified (cf. \cite{mark}). Let us recall some of them and prove a
couple of technical facts that will be needed later.

\begin{fact}\label{fact:lineinstartop}
  Let $S$ be a star and $T$ be a top in $\M$. If\/ $S\cap T\neq\emptyset$, then
  $S\cap T$ is a line and if additionally $U_1\in S\setminus T$, $U_2\in
  T\setminus S$, then $U_1, U_2$ are not collinear in $\M$.
\end{fact}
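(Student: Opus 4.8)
The plan is to handle the two assertions separately, translating everything into the subspace lattice of $V$. Write $S=\starof(H)$ with $H\in\Sub_{k-1}(V)$ and $T=\topof(B)$ with $B\in\Sub_{k+1}(V)$, so that $S\cap T=\{U\in\Sub_k(V)\colon H\subset U\subset B\}$.

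For the first claim I would observe that if this intersection contains some $U_0$, then $H\subset U_0\subset B$ forces $H\subset B$; since $\dim H=k-1$ and $\dim B=k+1$ with $H\subset B$, the set $\{U\in\Sub_k(V)\colon H\subset U\subset B\}$ is by definition exactly the $k$-pencil $\penc(H,B)$, i.e.\ a line of $\M$. Hence $S\cap T$ is a line.

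For the second claim I would argue by contradiction using the standard adjacency criterion in a Grassmann space: distinct $U_1,U_2\in\Sub_k(V)$ are collinear iff $\dim(U_1\cap U_2)=k-1$ (equivalently $\dim(U_1+U_2)=k+1$), the common pencil being $\penc(U_1\cap U_2,\,U_1+U_2)$. Suppose $U_1\in S\setminus T$ and $U_2\in T\setminus S$ were collinear. Then $H\subset U_1$, $U_1\not\subset B$, $U_2\subset B$, $H\not\subset U_2$, and, crucially, $H\subset B$ from the first part. Assuming $\dim(U_1\cap U_2)=k-1$, I would chain the inclusions $U_1\cap U_2\subseteq U_1\cap B\subsetneq U_1$: the strict inclusion holds because $U_1\not\subset B$, so $\dim(U_1\cap B)\le k-1$, while $U_1\cap U_2\subseteq U_1\cap B$ (as $U_2\subset B$) forces $\dim(U_1\cap B)=k-1$ and $U_1\cap U_2=U_1\cap B$. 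Now $H\subset U_1$ together with $H\subset B$ gives $H\subseteq U_1\cap B=U_1\cap U_2$, and since $\dim H=k-1=\dim(U_1\cap U_2)$ we conclude $H=U_1\cap U_2\subseteq U_2$, contradicting $H\not\subset U_2$.

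The computations here are elementary dimension counts, so the only genuine subtlety---and the step I would be most careful about---is that the non-collinearity in the second claim really does depend on the hypothesis $S\cap T\neq\emptyset$, through the inclusion $H\subset B$ it supplies; dropping this hypothesis the argument breaks at the final step and the conclusion can fail. This is precisely the point where the two parts of the statement are linked.
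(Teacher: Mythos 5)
Your argument is correct, but there is no in-paper proof to compare it against: the authors state this Fact without proof, as one of the ``numerous intrinsic properties of Grassmann spaces'' that ``are very well known or can be easily verified,'' with a reference to Pankov's book. Your verification is the standard lattice-theoretic one and each step checks out. Once $S\cap T\neq\emptyset$ forces $H\subset B$, the set $\{U\colon H\subset U\subset B\}$ is by definition the pencil $\penc(H,B)$, hence a line. For the second claim, the adjacency criterion $\dim(U_1\cap U_2)=k-1$ is the right one, and your chain $U_1\cap U_2\subseteq U_1\cap B\subset U_1$ (the last inclusion strict because $U_1\notin T$, i.e.\ $U_1\not\subset B$) correctly pins down $U_1\cap U_2=U_1\cap B$, whence $H\subseteq U_1\cap B=U_1\cap U_2$ and, by equality of dimensions, $H=U_1\cap U_2\subset U_2$, contradicting $U_2\notin S$. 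Your closing remark is also apt and worth retaining: the hypothesis $S\cap T\neq\emptyset$ is genuinely needed for the non-collinearity conclusion, not just for the first claim. For instance, with $k=2$, $n=4$, $H=\langle e_4\rangle$ and $B=\langle e_1,e_2,e_3\rangle$ (so $H\not\subset B$ and $S\cap T=\emptyset$), the points $U_1=\langle e_1,e_4\rangle\in S$ and $U_2=\langle e_1,e_2\rangle\in T$ satisfy $\dim(U_1\cap U_2)=1$ and are therefore collinear in $\M$.
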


\begin{cor}\label{cor:starcaptop}
  Let $S$ be a star and $T$ be a top in $\M$. If\/ $U_1\in S$, $U_2\in T$, and\/
  $U_1\adjac U_2$, then either $S\cap T = \emptyset$, or $U_1\in S\cap T$, or
  $U_2\in S\cap T$.
\end{cor}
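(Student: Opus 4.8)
The plan is to treat this as a straightforward contrapositive of Fact~\ref{fact:lineinstartop}. First I would dispose of the trivial case: if $S\cap T=\emptyset$, then the first disjunct of the conclusion already holds and there is nothing to prove. So I would assume from now on that $S\cap T\neq\emptyset$, which is precisely the hypothesis under which both assertions of Fact~\ref{fact:lineinstartop} become available.

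Next I would argue by contradiction. Suppose that neither $U_1\in S\cap T$ nor $U_2\in S\cap T$. Since $U_1\in S$ by assumption, the failure of $U_1\in S\cap T$ forces $U_1\in S\setminus T$; symmetrically, since $U_2\in T$, the failure of $U_2\in S\cap T$ forces $U_2\in T\setminus S$. Thus $U_1$ and $U_2$ sit on opposite sides of the intersection line, exactly in the configuration addressed by the second clause of Fact~\ref{fact:lineinstartop}.

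Finally I would invoke that second clause: with $S\cap T\neq\emptyset$, $U_1\in S\setminus T$, and $U_2\in T\setminus S$, Fact~\ref{fact:lineinstartop} yields that $U_1$ and $U_2$ are not collinear in $\M$. This contradicts the standing hypothesis $U_1\adjac U_2$. Hence at least one of $U_1\in S\cap T$ or $U_2\in S\cap T$ must hold, which together with the case $S\cap T=\emptyset$ gives the desired three-way disjunction.

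There is essentially no hard step here; the whole content has been front-loaded into Fact~\ref{fact:lineinstartop}. The only point demanding care is that the non-collinearity clause is conditional on $S\cap T\neq\emptyset$, so the initial case split on whether $S$ and $T$ meet is not merely cosmetic---it is exactly what makes the application of the Fact legitimate.
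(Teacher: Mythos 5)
Your proof is correct and is exactly the argument the paper intends: the corollary is stated without proof as an immediate consequence of Fact~\ref{fact:lineinstartop}, and your contrapositive reading (if $S\cap T\neq\emptyset$ and neither point lies in $S\cap T$, then $U_1\in S\setminus T$, $U_2\in T\setminus S$, so the Fact gives non-collinearity, contradicting $U_1\adjac U_2$) is precisely that intended derivation.
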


We say that a subspace of $\M$ is a \emph{plane} if it is (up to an isomorphism)
a projective plane.
It is known that every strong subspace of $\M$ carries the structure of a
projective space. For a star $S = [H)_k$ a plane contained in $S$ is of the form
$[H, Y]_k$, where $Y\in\Sub_{k+2}(V)$, while for a top $T = (B]_k$ a plane contained
in $T$ is of the form $[Z, B]_k$, where $Z\in\Sub_{k-2}(V)$.

\begin{lem}\label{lem:top-star-top}
  Let\/ $U$ be a point and\/ $T_i = (B_i]_k$, $i=1,2$\/ be distinct tops of\/ $\M$
  with $U\in T_1\cap T_2$.
  \begin{sentences}
  \item
    If\/ $\Pi = [Z, B_1]_k$ is a plane through $U$ contained in $T_1$, then
      $$\{ \starof(p)\cap T_2\colon p\in\peki_k(V),\ U\in p\subset \Pi\}$$
    is the set of lines through $U$ on the plane $[Z, B_2]_k$ contained in $T_2$.
    Consequently, we have a bijection $f$ that maps planes through $U$ contained in $T_1$
    to planes through $U$ contained in $T_2$.
  \item
    Let\/ $\Pi_i$ for $i=1,2,3$ be pairwise distinct planes through $U$ contained in $T_1$. 
    If\/ $\Pi_1\cap\Pi_2\cap\Pi_3$ is a line, then $f(\Pi_1)\cap f(\Pi_2)\cap f(\Pi_3)$ is a line.
  \end{sentences}
\end{lem}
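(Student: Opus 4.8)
The plan is to coordinatize everything by the subspaces of $U$. Since $T_1\neq T_2$ share the point $U$, we have $U\subseteq B_1\cap B_2$, and as $B_1,B_2$ are distinct $(k+1)$-subspaces their intersection is a proper subspace of each, forcing $U=B_1\cap B_2$; what I actually use, however, is only $U\subseteq B_i$. Recall that a plane through $U$ contained in $T_i=(B_i]_k$ is exactly $[Z,B_i]_k$ with $Z\in\Sub_{k-2}(V)$ and $Z\subseteq U$. Thus the planes through $U$ in $T_1$ and those in $T_2$ are both parametrized by the $(k-2)$-subspaces $Z$ of $U$, and I expect the construction to realize precisely the assignment $[Z,B_1]_k\mapsto[Z,B_2]_k$.

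For the first part I would first pin down the pencils $p$ with $U\in p\subseteq\Pi$, where $\Pi=[Z,B_1]_k$. Writing $p=\penc(H,C)$, membership $U\in p$ gives $H\subseteq U\subseteq C$, while $p\subseteq\Pi$ gives $Z\subseteq H$ and $C\subseteq B_1$; the dimension count $\dim C=k+1=\dim B_1$ then forces $C=B_1$. Hence $p=\penc(H,B_1)$ with $Z\subseteq H\subseteq U$ and $\dim H=k-1$, i.e.\ $p$ ranges exactly over the lines through $U$ on $\Pi$. For such $p$ one computes $\starof(p)\cap T_2=[H)_k\cap(B_2]_k=\{X\in\Sub_k(V)\colon H\subseteq X\subseteq B_2\}=\penc(H,B_2)$, using $H\subseteq U\subseteq B_2$. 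Each $\penc(H,B_2)$ is a line through $U$ on $[Z,B_2]_k$, and as $H$ runs over the hyperplanes of $U$ containing $Z$ we obtain all of them. So the displayed set is the full pencil of lines through $U$ on $[Z,B_2]_k$, which recovers that plane uniquely, giving $f([Z,B_1]_k)=[Z,B_2]_k$; this is manifestly a bijection since both families are indexed by the same parameter $Z$.

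For the second part the idea is to translate intersections of planes into sums of parameters. With $\Pi_i=[Z_i,B_1]_k$ and $W:=Z_1+Z_2+Z_3\subseteq U$, one has $\Pi_1\cap\Pi_2\cap\Pi_3=\{X\in\Sub_k(V)\colon W\subseteq X\subseteq B_1\}$, and this is a line precisely when $\dim W=k-1$ (in which case it equals $\penc(W,B_1)$; if $\dim W=k$ it is the single point $U$). The hypothesis therefore says $\dim W=k-1$. Since $f(\Pi_i)=[Z_i,B_2]_k$, the identical computation yields $f(\Pi_1)\cap f(\Pi_2)\cap f(\Pi_3)=\{X\in\Sub_k(V)\colon W\subseteq X\subseteq B_2\}=\penc(W,B_2)$, again a line because $\dim W=k-1$ and $W\subseteq U\subseteq B_2$. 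The essential point is that the condition ``the triple intersection is a line'' depends only on $W$, not on whether one closes up inside $B_1$ or $B_2$.

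The only genuinely delicate step is the well-definedness of $f$ in the first part: I must argue that $\{\,\starof(p)\cap T_2\colon U\in p\subseteq\Pi\,\}$ is not merely some collection of lines through $U$ in $T_2$ but exactly the pencil of all lines through $U$ of a single plane, so that $f(\Pi)$ is determined. This is exactly what the computation above delivers, identifying the collection with $\{\penc(H,B_2)\colon Z\subseteq H\subseteq U,\ \dim H=k-1\}$; everything else is routine dimension bookkeeping in the quotient $B_i/Z$.
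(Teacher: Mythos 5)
Your proposal is correct and follows essentially the same route as the paper's own proof: both parametrize the planes through $U$ in $T_i$ by $(k-2)$-subspaces $Z\subseteq U$, identify $f$ as $[Z,B_1]_k\mapsto[Z,B_2]_k$ via the computation $\starof(\penc(H,B_1))\cap T_2=\penc(H,B_2)$ for $H\in[Z,U]_{k-1}$, and in part (ii) reduce everything to the sum $Z_1+Z_2+Z_3$ being $(k-1)$-dimensional. Your write-up merely spells out a few details the paper leaves implicit (that $C=B_1$ by dimension count, and the dichotomy $\dim W\in\{k-1,k\}$).
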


\begin{proof}
  Note first that $T_1\cap T_2 = \{U\}$ and $U=B_1\cap B_2$. 
  
  (i):
  For the plane $\Pi$ observe that $Z\in\Sub_{k-2}(V)$ and $Z\subset U$. We have
  \begin{multline*}
    X := \{ \starof(p)\cap T_2\colon p\in\peki_k(V),\ U\in p\subset \Pi\} = \\
      \{ \starof(p)\cap T_2\colon p = \penc(H, B_1),\ H\in[Z, U]_{k-1} \} =
        \{ \penc(H,B_2)\colon H\in[Z, U]_{k-1} \}
  \end{multline*}
  as $H\subset U\subset B_2$. So, $X$ is the set of lines through $U$ on the
  plane $[Z, B_2]_k$ contained in $T_2$, which completes this part of the proof.
  
  (ii):
  Let $\Pi_i = [Z_i, B_1]_k$, where $Z_i\in\Sub_{k-2}(V)$ and $Z_i\subset U$ for $i=1,2,3$.
  By assumption $H := Z_1+Z_2+Z_3\in\Sub_{k-1}(V)$ and 
    $$\Pi_1\cap\Pi_2\cap\Pi_3 = \penc(H, B_1).$$
  Since $f(\Pi_i) = [Z_i, B_2]_k$ for $i=1,2,3$ and $H\subset U\subset B_2$,
  we get that 
    $$f(\Pi_1)\cap f(\Pi_2)\cap f(\Pi_3) = \penc(H, B_2)$$
  and we are through.
\end{proof}

Dually to \ref{lem:top-star-top} we have the following for stars.

\begin{lem}
  Let\/ $U$ be a point and\/ $S_i = [H_i]_k$, $i=1,2$\/ be distinct stars of\/ $\M$
  with $U\in S_1\cap S_2$.
  \begin{sentences}
  \item
    If\/ $\Pi = [H_1, Y]_k$ is a plane through $U$ contained in $S_1$, then
      $$\{ \topof(p)\cap S_2\colon p\in\peki_k(V),\ U\in p\subset \Pi\}$$
    is the set of lines through $U$ on the plane $[H_2, Y]_k$ contained in $S_2$.
    Consequently, we have a bijection $f$ that maps planes through $U$ contained in $S_1$
    to planes through $U$ contained in $S_2$.
  \item
    Let\/ $\Pi_i$ for $i=1,2,3$ be pairwise distinct planes through $U$ contained in $S_1$. 
    If\/ $\Pi_1\cap\Pi_2\cap\Pi_3$ is a line, then $f(\Pi_1)\cap f(\Pi_2)\cap f(\Pi_3)$ is a line.
  \end{sentences}
\end{lem}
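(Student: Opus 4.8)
The plan is to mirror the proof of Lemma~\ref{lem:top-star-top} under the star/top duality of $\M$, swapping the roles of $\starof$ and $\topof$, replacing the parameter $Z\in\Sub_{k-2}(V)$ with $Z\subset U$ by a parameter $Y\in\Sub_{k+2}(V)$ with $U\subset Y$, and --- crucially --- replacing the sum used there by the dual intersection. First I would record the analogue of the opening remark ``$T_1\cap T_2=\{U\}$ and $U=B_1\cap B_2$'': since $H_1,H_2$ are distinct $(k-1)$-subspaces both contained in the $k$-subspace $U$, their sum satisfies $H_1+H_2=U$, and hence
$$S_1\cap S_2 = \{W\in\Sub_k(V)\colon H_1+H_2\subseteq W\} = \{U\}.$$

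For part (i), I would write a plane through $U$ in $S_1$ as $\Pi=[H_1,Y]_k$ with $Y\in\Sub_{k+2}(V)$ and $U\subset Y$, and note that a line $p$ through $U$ contained in $\Pi$ is exactly a pencil $p=\penc(H_1,B)$ with $B\in[U,Y]_{k+1}$. Then $\topof(p)=\topof(B)$ and, since $H_2\subset U\subset B$, I would compute
$$\topof(B)\cap S_2 = \{W\in\Sub_k(V)\colon H_2\subset W\subset B\} = \penc(H_2,B).$$
Letting $B$ range over $[U,Y]_{k+1}$ shows that the displayed set is precisely the set of lines through $U$ on the plane $[H_2,Y]_k$ contained in $S_2$. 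Since planes through $U$ in a star $[H_i)_k$ are parametrized by the $(k+2)$-subspaces $Y$ with $U\subset Y$, the assignment $f\colon[H_1,Y]_k\mapsto[H_2,Y]_k$ is the asserted bijection.

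For part (ii), I would put $\Pi_i=[H_1,Y_i]_k$ with $Y_i\in\Sub_{k+2}(V)$ and $U\subset Y_i$. This is where the dual of the earlier sum appears: the hypothesis that $\Pi_1\cap\Pi_2\cap\Pi_3$ is a line forces $B:=Y_1\cap Y_2\cap Y_3$ to lie in $\Sub_{k+1}(V)$, with $\Pi_1\cap\Pi_2\cap\Pi_3=\penc(H_1,B)$. Then $f(\Pi_i)=[H_2,Y_i]_k$, and because $H_2\subset U\subset B$ the same computation gives $f(\Pi_1)\cap f(\Pi_2)\cap f(\Pi_3)=\penc(H_2,B)$, a line. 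The main obstacle --- really the only point demanding care rather than mechanical copying --- is precisely this lattice-dual swap: the proof of \ref{lem:top-star-top} combined the three $(k-2)$-subspaces via $H=Z_1+Z_2+Z_3$, whereas here the correct dual is to combine the three $(k+2)$-subspaces via the intersection $B=Y_1\cap Y_2\cap Y_3$. Getting the dimension count and the direction of this operation right is what makes the argument go through.
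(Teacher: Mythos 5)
Your proposal is correct and is exactly the argument the paper intends: the paper states this lemma without proof, remarking only that it follows ``dually'' to Lemma~\ref{lem:top-star-top}, and your write-up carries out precisely that dualization (swapping $\starof$/$\topof$, parametrizing planes by $Y\in\Sub_{k+2}(V)$ with $U\subset Y$, and replacing the sum $Z_1+Z_2+Z_3$ by the intersection $Y_1\cap Y_2\cap Y_3\in\Sub_{k+1}(V)$). All the individual computations, including $S_1\cap S_2=\{U\}$, $\topof(B)\cap S_2=\penc(H_2,B)$, and the triple-intersection step in (ii), are correct.
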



\section{Complements in projective spaces}\label{sec:proj}

Let $\fixproj=\struct{S, \lines}$ be a projective space of dimension at least $3$
and let $\W\subset S$ such that \eqref{eq:size-of-line} is satisfied.
We will investigate the complement of\/ $\W$ in $\fixproj$,
that is the structure $\fixout := \fixoutf(\fixproj,\W)$.
 
Note that in the case where $\W$ is a hyperplane, the complement
$\fixout$  is an affine space, while if $W$ is not necessarily a hyperplane but
a proper subspace of $\fixproj$, then $\fixout$ is a slit space
(cf. \cite{KM67}, \cite{KP70}, \cite{slitgras}).

The goal of the first part of our paper is as follows.
\begin{thm}\label{thm:main-proj}
  If\/ $\fixproj$ is a projective space of dimension at least $3$ and $\W$ is its
  point subset satisfying \eqref{eq:size-of-line}, then both $\fixproj$ and\/ 
  $\W$ can be recovered in the complement\/ $\fixoutf(\fixproj,\W)$.
\end{thm}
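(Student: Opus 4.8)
The plan is to imitate affine parallelism inside the slit space $\fixout$: first reconstruct, purely in terms of $\fixout$, a relation that detects when two proper lines meet on the horizon, then read off the improper points of $\W$ as the pencils of proper lines through such a common point, and finally recover the improper lines together with the whole incidence. Throughout write $\lambda=\fixind$ and recall from Fact~\ref{fact:basic} that every proper line carries at least $\lambda+2$ proper points while at most $\lambda$ of its points are improper; this surplus of proper points is what drives the counting below. Since $\W\neq S$ and every line through a proper point is proper, every improper point $p$ lies on a proper line, e.g.\ on $\LineOn(a,p)$ for any proper point $a$, which is exactly what makes $p$ visible from $\fixout$.

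First I would define improper adjacency $\impadjac$ on proper lines without referring to $\fixproj$ or $\W$. For distinct proper lines $k,l$ with $k\not\adjac l$ I declare $k\impadjac l$ precisely when $k$ and $l$ are coplanar in $\fixproj$; the issue is to phrase coplanarity internally. The characterisation I would prove is that $k$ and $l$ are coplanar iff there exist distinct proper points $a_1,a_2\in k$ and distinct proper points $b_1,b_2\in l$ whose transversals satisfy $\LineOn(a_1,b_1)\adjac\LineOn(a_2,b_2)$. The forward implication is immediate once one notices that two proper transversals meeting at a proper point span a plane already containing $a_1,a_2,b_1,b_2$, hence containing $k$ and $l$; the reverse implication is the delicate one.

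The hard part is that reverse implication, namely showing that two coplanar proper lines really do admit a pair of transversals crossing at a \emph{proper} point. In the common plane the transversals $\LineOn(a_1,b_1)$ and $\LineOn(a_2,b_2)$ always meet, but possibly on the horizon, so one must choose the four proper points so that the crossing is proper; this is exactly where hypothesis \eqref{eq:size-of-line} is indispensable, since the at-least-$\lambda+2$ proper points on each of $k,l$ provide enough freedom to dodge the at-most-$\lambda$ improper points that could spoil a transversal or its crossing. I expect this counting lemma to be the technical heart of the proof. Once $\impadjac$ is available internally, the improper points are recovered as follows: a family of proper lines that is pairwise $\impadjac$ and not coplanar is concurrent (three pairwise meeting, non-coplanar lines meet in a point, which forces one common point for the whole family), and that common point is improper because every pairwise meeting happens on the horizon. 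A maximal such family is then the full pencil $\stardir(p)$ of all proper lines through a single improper point $p$; conversely, since $\dim\fixproj\ge 3$, through each improper point there pass non-coplanar proper lines, so every $p\in\W$ arises from exactly one such maximal family, and distinct points yield distinct families.

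Finally I would set $\bar S=\pointsout\cup\{\stardir(p):p\in\W\}$ and extend each proper line $k$ by those improper points $p$ with $k\in\stardir(p)$, which returns exactly the point set of the original projective line carried by $k$. Two improper points $p,q$ lie on a line contained in $\W$ precisely when $\stardir(p)\cap\stardir(q)=\emptyset$, since a proper line through both would be their common join; recovering planes as subspace closures of pairs of intersecting extended lines and then invoking the internal projective structure of each such plane lets one pin down the remaining, entirely improper lines and all missing incidences. Verifying that the resulting incidence structure is isomorphic to $\fixproj$, with $\W$ recovered as the set of improper points, is then routine and completes the proof of Theorem~\ref{thm:main-proj}.
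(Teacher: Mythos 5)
Your proposal follows the same skeleton as the paper's proof: your relation $\impadjac$ on proper lines is exactly the parallelism \eqref{eq:parallel}, your crossing-transversal criterion is a variant of \ref{prop:parallel} (and the counting you foresee is the paper's Veblen-type Lemma \ref{prop:veblen}), improper points are recovered as pencils of parallels, and your rule that $p,q$ lie on an improper line iff their pencils are disjoint is \eqref{eq:impadjac}. However, two of the steps you treat as available or routine are exactly where the paper has to work, and as written both are gaps. The first: to isolate the pencils you quantify over families of pairwise parallel lines that are \emph{not coplanar}, but at that stage of your construction no internal meaning has been given to coplanarity of a \emph{family}. The only coplanarity you made internal is that of pairs, and on pairwise-parallel families it is vacuous: parallel lines meet (on the horizon), hence any two of them are coplanar, so the pair test discards nothing. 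The danger is concrete, not hypothetical. Take $\W$ to be a quadric as in Example \ref{exm:quadric} and let $s_1,s_2,s_3$ be the sides of a triangle inscribed in a conic plane section: they pairwise meet in points of $\W$, and no further line can meet all three of them in improper points (within the plane such a line would have to be a side or meet the conic in three distinct points; outside the plane it would have to pass through a common point of the sides), so $\{s_1,s_2,s_3\}$ is a \emph{maximal} $\parallel$-clique which is coplanar and has no common point. Without an internal filter, your recipe turns it into a fake improper point. Supplying that filter is precisely the content of \ref{prop:cliques} and of condition $(\ast)$ in \ref{prop:stars} (skew secant lines witness non-coplanarity), or alternatively of defining proper planes \emph{before} improper points via proper triangles (\ref{triangle-span}, \ref{cor:planes}); your proposal instead defines planes only after the improper points, so the order of construction is circular.

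The second gap is the final step, recovering the wholly improper lines, which you describe as routine. It is not, and your sketch fails as stated: you cannot form ``subspace closures of pairs of intersecting extended lines'', since closure presupposes knowledge of the lines joining improper points --- the very objects to be defined; and the internal projective structure of a single plane does not determine which of its improper points are collinear. Indeed, let $\W$ be the union of three lines of one plane forming a triangle (then the index equals $3$, and \eqref{eq:size-of-line} holds once lines have at least $8$ points; cf.\ Example \ref{exm:multihole}). The three vertices are pairwise improper-adjacent, since each pair is joined by a line inside $\W$, and they even form a maximal clique of improper adjacency, yet they are not collinear. So pairwise adjacency plus per-plane structure cannot finish the job. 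The paper's resolution is the cross-plane criterion \eqref{collinearity}: three improper points are collinear iff they are pairwise improper-adjacent \emph{and} lie in two distinct proper planes; this is sound because a non-collinear triple spans a unique plane, whereas every improper line lies in at least $\fixind+2$ proper planes by Lemma \ref{impintersec}. This idea, and the lemma backing it, is absent from your proposal, and it is the second place where \eqref{eq:size-of-line} does real work.
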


More in vein of Klein's Erlangen Program this theorem can be rephrased
to the language of collineations.

\begin{thm}
  Under assumptions of \ref{thm:main-proj}
  for every collineation $f$ of the complement\/ $\fixoutf(\fixproj,\W)$ there is 
  a collineation $F$ of\/ $\fixproj$, leaving $\W$ invariant, such that
  $f = F|\pointsout$.
\end{thm}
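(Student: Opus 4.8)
The plan is to exploit the fact that the entire reconstruction carried out in the proof of Theorem~\ref{thm:main-proj} proceeds through notions that are intrinsic to the complement $\fixout=\fixoutf(\fixproj,\W)$, and hence are preserved by its collineations. Concretely, the improper points of $\W$ are recovered as the classes of mutually parallel proper lines (the ``directions''), while the improper lines and the full point--line incidence of $\fixproj$ are recovered from adjacency, collinearity and triangles in $\fixout$ alone. Since every collineation of $\fixout$ respects each of these defining relations, it must carry recovered objects onto recovered objects in an incidence-preserving fashion; turning this observation into an explicit extension is the whole content of the argument.

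First I would fix a collineation $f$ of $\fixout$ and record that $f$ preserves the parallelism relation used in Theorem~\ref{thm:main-proj} to recover $\W$: as $f$ maps proper lines to proper lines and preserves both adjacency and non-adjacency, it permutes the parallel classes of proper lines. This yields a map $f^{\infty}$ on the set of improper points, where $f^{\infty}(\omega)$ is the class onto which $f$ carries the bundle of proper lines that defines $\omega$. Running the same argument for $f^{-1}$ produces the inverse of $f^{\infty}$, so $f^{\infty}$ is a bijection of $\W$. I would then assemble the candidate extension $F:=f\cup f^{\infty}$, a bijection of $S=\pointsout\cup\W$ with $F|\pointsout=f$ and $F(\W)=\W$ by construction, so that $\W$ is left invariant.

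It remains to verify that $F$ is a collineation of $\fixproj$. For a proper line $k$, its unique improper point is determined by the parallel class of $k$, so $F$ sends $k$ together with that improper point to a line of $\fixproj$; for a line contained entirely in $\W$, the reconstruction of the improper incidence in Theorem~\ref{thm:main-proj} exhibits it as the set of improper points attached to a suitable family of proper lines, which $f^{\infty}$ carries onto an analogous family and hence onto an improper line. Checking that $F$ preserves incidence between proper points and improper lines, and between improper points and improper lines, reduces to the intrinsic incidences already established in the proof of the main theorem, applied once to $f$ and once to $f^{-1}$.

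The main obstacle is precisely this final verification: that $F$ sends \emph{every} line of $\fixproj$ onto a line and respects the whole incidence simultaneously, including the lines lying inside the horizon, which are invisible in $\fixout$ and accessible only through their recovered descriptions. The burden is to show that the passage from $f$ to $f^{\infty}$ is coherent across all these incidences at once, and it is exactly here that the collineation-invariant character of the constructions in Theorem~\ref{thm:main-proj} does the work, ensuring that no geometric hypotheses beyond \eqref{eq:size-of-line} are required. Uniqueness of $F$ then follows immediately, since $F$ is forced on $\pointsout$ and each improper point is pinned down by proper lines whose images are already determined by $f$.
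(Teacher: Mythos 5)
Your proposal is correct and coincides with the paper's own (implicit) argument: the paper gives no separate proof of this theorem, because it is an immediate consequence of the fact that every step of the reconstruction in Theorem~\ref{thm:main-proj} --- parallelism, star directions, the extended incidence, improper collinearity and improper lines --- is defined purely in the internal language of $\fixoutf(\fixproj,\W)$ and is therefore preserved by any collineation $f$, which is exactly the argument you spell out. One slip to fix: since $\W$ is an arbitrary subset whose index $\fixind$ may exceed $1$, a proper affine line $k$ can carry several improper points and ``the parallel class of $k$'' is not well defined (this parallelism is not transitive); instead refer to the set of star directions $\stardir(a)$ with $k\in\stardir(a)$, which is definable and hence carried by $f$ onto the corresponding set for $f(k)$, after which your verification goes through unchanged.
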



\subsection{Parallelism}

In affine structures parallelism is a key notion for projective completion.
We are going to use it here in a similar way.

Usually, two lines are said to be
parallel if they meet on the horizon. Following this idea we define \emph{parallelism}
$\parallel$ so that for lines $k_1, k_2\in\lines$
\begin{equation}\label{eq:parallel}
  k_1\parallel k_2 :\iff \emptyset\neq k_1\cap k_2\subseteq\W.
\end{equation}
In case $\fixout$ is a slit space our requirement
\eqref{eq:size-of-line} that lines are of size at least 3 makes good sense as it
is known that in an affine space  with lines of size 2 parallelism cannot be
defined in terms of incidence.

Definition \eqref{eq:parallel} involves
the incidence relation of the ambient projective space $\fixproj$.
To give an internal definition of our parallelism, expressible purely in terms of the
complement $\fixout$, we begin by proving a variant of Veblen (Pasch)
axiom for $\fixout$.

\begin{lem}\label{prop:veblen}
  If proper lines $k_1, k_2, k_3$ form a triangle in $\fixproj$ with at least
  one improper vertex, then there is a proper line that intersects
  $k_1, k_2, k_3$ in pairwise distinct proper points.
\end{lem}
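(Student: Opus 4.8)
The plan is to reduce the assertion to a counting argument carried out inside a single projective plane. Since $k_1, k_2, k_3$ are pairwise intersecting but not concurrent, they are coplanar: two of them span a plane $\pi$ of $\fixproj$, and the third, meeting each in a distinct point, has two points in $\pi$ and so lies in $\pi$. Moreover any line meeting $k_1, k_2, k_3$ in three distinct points contains two points of $\pi$ and hence lies in $\pi$ as well, so it suffices to build the transversal within $\pi$. I would label the vertices as $a_1 = k_2\cap k_3$, $a_2 = k_1\cap k_3$, $a_3 = k_1\cap k_2$, and, using that an improper vertex exists and lies on exactly two of the lines, relabel so that this vertex is $a_1$; thus $a_1\in\W$ and $a_1\notin k_1$.

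Next I would fix once and for all a proper point $p_1$ on $k_1$ with $p_1\neq a_2, a_3$. Such a point exists, since by \ref{fact:basic}\eqref{fact:basic:min} the line $k_1$ carries at least $\lambda + 2$ proper points, where $\lambda = \indf(\fixproj,\W)$, and at most the two vertices $a_2, a_3$ are excluded, leaving at least $\lambda\geq 1$ admissible choices. Because $p_1\neq a_2, a_3$, we have $p_1\notin k_2$ and $p_1\notin k_3$, so the perspectivity $\psi\colon k_2\to k_3$ with center $p_1$, given by $x\mapsto\LineOn(p_1,x)\cap k_3$, is a well-defined bijection of the two lines.

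The crux is then to choose $p_2$ on $k_2$ so that $p_2$ and its image $\psi(p_2)$ are both proper and $p_2$ avoids the vertices; this is where the hypothesis of an improper vertex is spent. Counting within the at least $\lambda + 2$ proper points of $k_2$, I would discard the bad ones: the image $\psi(p_2)$ is improper for at most $\lambda$ values of $p_2$, since $k_3$ carries at most $\lambda$ improper points and $\psi$ is injective; and of the two vertices $a_1, a_3$ lying on $k_2$, the vertex $a_1$ is improper and hence not a proper candidate at all, so only $a_3$ need be removed. This leaves at least $(\lambda + 2) - \lambda - 1 = 1$ admissible point $p_2$. The delicate point is precisely this bookkeeping: had all three vertices been proper we would have to subtract $2$ and the count would collapse to $0$, so the improper vertex supplies exactly the needed unit of slack.

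Finally I would put $m = \LineOn(p_1, p_2)$ and verify it is the required transversal. It is proper by \ref{fact:basic}\eqref{fact:basic:proper}, since it contains the proper point $p_1$. As $p_2\notin k_1$ and $p_1\notin k_2$, the line $m$ equals neither $k_1$ nor $k_2$, so it meets them in the single points $p_1$ and $p_2$ respectively; and $p_1\notin k_3$ forces $m\neq k_3$, so $m$ meets $k_3$ in the single point $p_3 := \psi(p_2)$, proper by construction. A short check rules out coincidences with vertices: $p_3 = a_1$ would force $a_1\in m\cap k_2 = \{p_2\}$, and $p_3 = a_2$ would force $a_2\in m\cap k_1 = \{p_1\}$, both excluded; hence $p_1, p_2, p_3$ are pairwise distinct proper points and $m$ intersects $k_1, k_2, k_3$ as required.
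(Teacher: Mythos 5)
Your proof is correct and follows essentially the same route as the paper's: both arguments fix a proper point on one side of the triangle, project the proper points of a second side onto the third side through that point, and use the bound of at most $\fixind$ improper points on a proper line to find a transversal meeting all three sides in pairwise distinct proper points. The only difference is bookkeeping --- the paper relabels so the improper vertex is $k_1\cap k_2$ and pigeonholes among $\fixind+1$ candidate lines, whereas you place it at $k_2\cap k_3$ and discard at most $\fixind+1$ bad choices of $p_2$ from the at least $\fixind+2$ proper points of $k_2$ --- which is the same count organized dually.
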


\begin{proof}
  Let $a_m\in k_i\cap k_j$, where $\{m,i,j\} = \{1,2,3\}$, be the vertices of our triangle.
  Without loss of generality we can assume that the point $a_3$ is improper.
  By \ref{fact:basic}\eqref{fact:basic:min}
  we can take proper points $a\in k_1$ and $b_1,\dots, b_{\fixind+1}\in k_2$
  such that $a\neq a_2$ and $b_i\neq a_1$ for all $i=1,\dots,\fixind+1$.
  Consider the lines $l_i := \LineOn(a, b_i)$ for $i=1,\dots,\fixind+1$.
  All these lines are proper and all they intersect the line $k_3$. As there are only
  up to $\fixind$ improper points on $k_3$, one of $l_i$ is the required line.
\end{proof}

Now, we are able to give a definition of $\parallel$ purely in terms of
$\fixout$.

\begin{prop}\label{prop:parallel}
  Let $k_1, k_2\in\linesout$ with $k_1\neq k_2$. Then
  \begin{multline}\label{parallel:veblen}
    k_1\parallel k_2 :\iff (\exists k_3, l\in\linesout) (\exists c_1, c_2, c_3\in\pointsout)
      \bigl[\, \triangle(k_1, k_2, k_3) \Land \\
         c_1\in l, k_1 \Land c_2\in l, k_2 \Land c_3\in l, k_3 \,\bigr]
           \Land (\nexists\; x\in\pointsout)[\, x\in k_1, k_2 \,].
  \end{multline}
\end{prop}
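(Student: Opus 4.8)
The plan is to prove the biconditional \eqref{parallel:veblen} by establishing its two implications separately, reading the left-hand side through its external definition \eqref{eq:parallel}: $k_1\parallel k_2$ means exactly that $k_1\cap k_2$ is a single improper point. The right-hand side splits into two independent demands — a Veblen configuration built from a triangle $\triangle(k_1,k_2,k_3)$ cut by a proper transversal $l$ in three proper points $c_1,c_2,c_3$, and the clause that $k_1,k_2$ carry no common proper point. The whole effort is to see that, for two distinct proper lines, these two demands together are equivalent to their unique common point lying on $\W$. The backbone of the forward implication is Lemma~\ref{prop:veblen}, while the reverse implication is essentially a bookkeeping argument inside $\fixproj$.

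For the implication $k_1\parallel k_2\Rightarrow$ (right-hand side), I would start from $k_1\cap k_2=\{p\}$ with $p\in\W$. By Fact~\ref{fact:basic}\eqref{fact:basic:min} each of $k_1,k_2$ carries at least $\fixind+2\ge 3$ proper points, so I can pick proper points $a_1\in k_1$ and $a_2\in k_2$ with $a_1\ne a_2$; since $p$ is improper, automatically $a_1,a_2\ne p$. Put $k_3:=\LineOn(a_1,a_2)$, which is proper by Fact~\ref{fact:basic}\eqref{fact:basic:proper}. I would then check that $k_1,k_2,k_3$ are pairwise distinct and pairwise meet (at $p$, $a_1$, $a_2$), and that $p\notin k_3$ — for otherwise $k_3$ and $k_1$ would share the two points $a_1,p$ and hence coincide, contradicting $a_2\in k_3\setminus k_1$ — so the three lines are not concurrent and $\triangle(k_1,k_2,k_3)$ holds with the improper vertex $p$. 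Applying Lemma~\ref{prop:veblen} to this triangle then yields the proper transversal $l$ meeting $k_1,k_2,k_3$ in pairwise distinct proper points, which serve as $c_1,c_2,c_3$; the no-common-proper-point clause is immediate because the only point of $k_1\cap k_2$ is the improper $p$.

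For the reverse implication I would argue directly in $\fixproj$: the triangle condition forces $k_1\adjac k_2$ in the ambient space, so $k_1\cap k_2$ is a single point $q\in S$ (two distinct lines of a projective space meet in at most one point), while the clause $(\nexists\,x\in\pointsout)[x\in k_1,k_2]$ says $q\notin\pointsout$, that is $q\in\W$; hence $\emptyset\ne k_1\cap k_2\subseteq\W$ and $k_1\parallel k_2$. The step I expect to be the genuine obstacle is making precise, for the backward direction, that the configuration really does certify that $k_1$ and $k_2$ meet at all: two proper lines with no common proper point could a priori be parallel or skew in a space of dimension at least $3$, and it is exactly the triangle together with the proper cutting line $l$ — the internally visible Veblen figure — that rules out skewness and pins their intersection on the horizon. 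Care is also needed to confirm that the forward construction produces a non-degenerate triangle carrying an improper vertex, since that is precisely the hypothesis under which Lemma~\ref{prop:veblen} was established.
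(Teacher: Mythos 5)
Your proposal is correct and is essentially the paper's own proof: left to right, it builds the proper line $k_3=\LineOn(a_1,a_2)$ through proper points $a_1\in k_1$, $a_2\in k_2$, obtains a triangle with the improper vertex $k_1\cap k_2$, and invokes Lemma~\ref{prop:veblen} to produce $l$ and $c_1,c_2,c_3$; right to left, it notes that the triangle forces $k_1,k_2$ to meet, and the last clause of \eqref{parallel:veblen} pushes that intersection point onto the horizon, so \eqref{eq:parallel} applies. Your extra non-degeneracy checks (e.g.\ that $p\notin k_3$) and the closing remark about skew lines only make explicit what the paper leaves implicit, namely that $\triangle(k_1,k_2,k_3)$ is read as asserting adjacency of $k_1,k_2$ in the ambient space $\fixproj$, which is exactly how the paper's two-line backward argument gets that the lines intersect at all.
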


\begin{proof}
  \ltor
  Since $k_1, k_2$ are parallel we have $\emptyset\neq k_1\cap k_2\in\W$.
  So, $k_1, k_2$ share an improper point. Take two proper points
  $a_1\in k_1$ and $a_2\in k_2$. They must be distinct and the line $k_3 := \LineOn(a_1, a_2)$
  is proper. We have $\triangle(k_1, k_2, k_3)$ with one improper vertex. Hence, by
  \ref{prop:veblen} there is a required line $l\in\linesout$ and points $c_1, c_2, c_3\in\pointsout$.

  \rtol
  Note that the lines $k_1,k_2$ intersect in an improper point, i.e.\
  $\emptyset\neq k_1\cap k_2\in\W$. So, $k_1\parallel k_2$
  by \eqref{eq:parallel}.
\end{proof}

Thanks to \ref{prop:parallel} we can distinguish affine lines using
the internal language of $\fixout$.

\begin{fact}\label{fact:affinelines}
  A line $k\in\linesout$ is affine iff
  there exists a line $l\in\linesout$ such that $l\neq k\parallel l$.
\end{fact}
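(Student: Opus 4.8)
The plan is to characterize affine lines (those meeting the horizon $\W$) purely internally by the existence of a parallel companion line. Recall that by \eqref{eq:parallel} a proper line $k$ is affine exactly when $k\cap\W\neq\emptyset$, and by Proposition~\ref{prop:parallel} the relation $\parallel$ on proper lines is already expressible in the internal language of $\fixout$. So the whole statement reduces to showing that a proper line $k$ meets $\W$ if and only if it admits a distinct proper line $l$ with $k\parallel l$.

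First I would prove the easier left-to-right direction. Suppose $k\in\linesout$ is affine, so there is an improper point $p\in k\cap\W$. Since $\fixproj$ is a projective space of dimension at least $3$, the point $p$ lies on many lines, and I want to produce a second proper line $l$ through $p$ distinct from $k$; then $\emptyset\neq k\cap l=\{p\}\subseteq\W$ gives $k\parallel l$ by \eqref{eq:parallel}. To get such an $l$ I would pick a proper point $a\in k$ (which exists by Fact~\ref{fact:basic}\eqref{fact:basic:min}) and a proper point $b\notin k$ not collinear-through-$p$ with anything forcing $l\subseteq\W$; concretely, take any proper point $b$ outside $k$ and set $l:=\LineOn(p,b)$. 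This line passes through the proper point $b$, hence is proper by Fact~\ref{fact:basic}\eqref{fact:basic:proper}, it is distinct from $k$ because $b\notin k$, and it meets $k$ only in the improper point $p$. Thus $l\neq k\parallel l$.

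For the converse, assume $l\in\linesout$ with $l\neq k\parallel l$. By the definition \eqref{eq:parallel} of parallelism we have $\emptyset\neq k\cap l\subseteq\W$, so $k$ and $l$ share a point, and that common point lies in $\W$; in particular $k\cap\W\neq\emptyset$, which says precisely that $k$ is affine. This direction is essentially immediate from unfolding the definition.

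The main obstacle, such as it is, lies in the forward direction: one must be sure that a genuinely \emph{second} proper line through the horizon point $p$ exists and is proper rather than contained in $\W$. This is where the hypotheses do the work — dimension at least $3$ guarantees enough points off $k$, and Fact~\ref{fact:basic}\eqref{fact:basic:proper} guarantees that any line through the proper point $b$ stays proper, so $l\nsubseteq\W$ and $l\in\linesout$. Once the existence of $l$ is secured the result follows, and since $\parallel$ itself is internally definable by Proposition~\ref{prop:parallel}, the entire biconditional is expressed without reference to $\fixproj$ or $\W$, as required.
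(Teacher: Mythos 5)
Your proof is correct and follows the same route the paper intends: the paper states this fact without a separate proof, treating it as immediate from the definition \eqref{eq:parallel} of $\parallel$ (your converse direction) together with the obvious construction of a second proper line through a point of $k\cap\W$ (your forward direction). The only step you gloss --- the existence of a proper point $b\notin k$ --- is a one-liner from Fact~\ref{fact:basic}: a proper point $a\in k$ lies on a second line through it, which is proper and carries at least $\fixind+2\ge 3$ proper points, one of which is distinct from $a$ and hence off $k$.
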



\subsection{Planes}

By analogy to proper line we introduce the term \emph{proper plane}. Namely we
 say that a plane $\Pi$ of $\fixproj$ is proper (or is a
\emph{plane of\/ $\fixout$}) if $\Pi\not\subset\W$. In the following lemma
we try to justify this terminology.

\begin{lem}\label{lem:triangle-plane}
  There is a proper triangle on every plane of\/ $\fixout$.
\end{lem}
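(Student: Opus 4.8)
The plan is to take a proper plane $\Pi$ of $\fixout$, meaning $\Pi \not\subseteq \W$, and produce three pairwise adjacent but non-collinear proper points lying on $\Pi$, joined by proper lines. Since $\Pi \not\subseteq \W$, there is at least one proper point $a_1 \in \Pi \setminus \W$. The core difficulty is that a single proper point is not enough: I must manufacture two further proper points on $\Pi$, collinear with neither each other nor forced onto a common line with $a_1$, and crucially I must ensure the three connecting sides are all proper lines rather than lines buried in the horizon.

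First I would exploit the counting bound from Fact~\ref{fact:basic}\eqref{fact:basic:min}: every proper line carries at least $\fixind + 2$ proper points. Starting from the proper point $a_1$, I would select, inside $\Pi$, two distinct proper lines through $a_1$ (a projective plane has many lines through any point, and by Fact~\ref{fact:basic}\eqref{fact:basic:proper} every line through a proper point is proper, so these two lines are automatically proper). On each of these two lines I pick a second proper point, say $a_2$ on the first line and $a_3$ on the second, using the bound to guarantee such proper points exist distinct from $a_1$. By construction $a_1, a_2, a_3$ are pairwise distinct points of $\Pi$, with $a_1 \adjac a_2$ and $a_1 \adjac a_3$ witnessed by proper lines.

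The remaining step is to verify that $\{a_1,a_2,a_3\}$ is genuinely a proper triangle: I need $a_2 \adjac a_3$ via a proper line, and I need the three points to be non-collinear. Non-collinearity is handled by the choice of $a_2$ and $a_3$ on two \emph{different} lines through $a_1$ — if all three were collinear, the two lines through $a_1$ would coincide, contradicting distinctness. The adjacency $a_2 \adjac a_3$ holds because $\Pi$ is a subspace of the projective plane structure, so the line $\LineOn(a_2,a_3)$ lies in $\Pi$; and since $a_2, a_3$ are proper, that side is a line through a proper point, hence proper by Fact~\ref{fact:basic}\eqref{fact:basic:proper}.

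I expect the main obstacle to be the careful bookkeeping that guarantees the two initial lines through $a_1$ really can be chosen \emph{distinct} while both staying inside $\Pi$, and that the selected second points avoid any accidental coincidences. This is where the hypothesis that $\fixproj$ has dimension at least $3$ and that $\Pi$ is a genuine projective plane (so it contains at least two lines through $a_1$ and enough points on each) is essential; the line-size assumption \eqref{eq:size-of-line}, transmitted through Fact~\ref{fact:basic}\eqref{fact:basic:min}, then does the rest of the work by furnishing the proper points needed for the vertices. Once these selections are justified, all three sides are proper lines through proper points, all three vertices are proper, and $\triangle(a_1,a_2,a_3)$ is the desired proper triangle on $\Pi$.
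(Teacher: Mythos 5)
Your proposal is correct and takes essentially the same route as the paper's own proof: pick a proper point $a_1$ on $\Pi$, take two distinct lines through $a_1$ inside $\Pi$ (proper by Fact~\ref{fact:basic}\eqref{fact:basic:proper}), choose proper second points on each via Fact~\ref{fact:basic}\eqref{fact:basic:min}, and close the triangle with the join of those two points, which is proper because it passes through a proper point. If anything, you spell out the properness of the third side and of the vertices $a_2,a_3$ more explicitly than the paper does, but the argument is the same.
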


\begin{proof}
  Let $\Pi$ be a plane of $\fixout$.
  There is a proper point $a$ on $\Pi$, since $\Pi\not\subset\W$.
  Every line through $a$ is proper by \ref{fact:basic}\eqref{fact:basic:proper}.
  There are at least $2\fixind +2$ lines
  through $a$ on $\Pi$ by \eqref{eq:size-of-line}.
  So, take two distinct lines $l_1,l_2$ with $a\in l_1,l_2\subset\Pi$, and
  consider two points $b,c\neq a$ such that $a\in l_1$, $b\in l_2$.
  Then $a,b,c$ and $l_1, l_2, \LineOn(b,c)$ form the required triangle.
\end{proof}

By \ref{lem:triangle-plane} we have a proper triangle on a plane $\Pi$ of $\fixout$.
Now we prove that this triangle spans $\Pi$.

\begin{lem}\label{triangle-span}
  Let\/ $\Pi$ be a proper plane and let $k_1, k_2, k_3$ be the sides of a
  proper triangle on\/ $\Pi$.
  For every (proper or improper) point $a\in\Pi$ there is a proper line through $a$ that intersects two of
  the sides $k_1,k_2,k_3$ in two distinct proper points.
\end{lem}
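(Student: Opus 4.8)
The plan is to argue by counting inside the projective plane $\Pi$, exploiting two facts: a side, being a proper line, carries at most $\fixind$ improper points (by the index assumption, since a proper line is not contained in $\W$), whereas the pencil of lines through any point of $\Pi$ has as many members as a line of $\Pi$ has points, hence at least $2\fixind+2$ by \eqref{eq:size-of-line} ($\Pi$ being a projective plane). Write $v_1, v_2, v_3$ for the vertices, with $v_m$ the common point of $k_i, k_j$ where $\{m,i,j\} = \{1,2,3\}$; since the triangle is proper, all three vertices are proper points.

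First I would treat the case where $a$ is not a vertex. Then $a$ lies on at most one side, so there are two sides, say $k_i$ and $k_j$, with $a\notin k_i\cup k_j$. Because $a\notin k_i$, the assignment $l\mapsto l\cap k_i$ is injective on the pencil through $a$, so at most $\fixind$ lines of that pencil meet $k_i$ in an improper point; likewise at most $\fixind$ meet $k_j$ improperly. Moreover exactly one line of the pencil, namely $\LineOn(a, v_m)$ (well defined since $a\neq v_m$), passes through $v_m = k_i\cap k_j$. Discarding these at most $2\fixind+1$ lines from the at least $2\fixind+2$ lines through $a$ leaves a line $l$ meeting $k_i$ and $k_j$ in proper points $p_i, p_j$ and avoiding $v_m$; the latter forces $p_i\neq p_j$, for a coincidence would place the common point in $k_i\cap k_j = \{v_m\}$. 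Carrying two proper points, $l$ is proper, and it is the required line.

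It remains to treat the case where $a$ is a vertex, say $a = v_3 = k_1\cap k_2$; note that $a$ is then proper and $a\notin k_3$. Here I would aim at the pair $k_1, k_3$, using $a$ itself as the point on $k_1$. As before, at most $\fixind$ lines of the pencil through $a$ meet $k_3$ in an improper point, and discarding in addition the single line $k_1$ leaves, out of the at least $2\fixind+2$ lines through $a$, a line $l\neq k_1$ that meets $k_3$ in a proper point $p_3$. Then $l\cap k_1 = \{a\}$, as $l$ and $k_1$ are distinct lines through $a\in k_1$, and $p_3\neq a$ since $a\notin k_3$; thus $l$ meets $k_1, k_3$ in the distinct proper points $a, p_3$ and is proper.

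I expect the only delicate points to be the incidence degeneracies: guaranteeing that the selected line is not one of the sides themselves, so that each intersection with a side is a single point, and that the two proper intersection points do not collapse onto a shared vertex. Both are controlled by the small, explicitly bounded families of lines discarded above, weighed against the lower bound $2\fixind+2$ on the size of the pencil supplied by \eqref{eq:size-of-line}.
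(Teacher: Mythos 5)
Your proof is correct, and it is a genuine variant of the paper's argument rather than a reproduction of it. Both proofs are pigeonhole counts built from the same two ingredients --- a proper line carries at most $\fixind$ improper points, and \eqref{eq:size-of-line} supplies the quantity $2\fixind+2$ --- but the counts are organized differently. The paper projects one side onto another: for $a$ off all three sides it takes at least $\fixind$ proper points on $k_1$ avoiding the vertices, joins them to $a$, and pigeonholes the intersection points against the at most $\fixind$ improper points of $k_2$; because these two bounds can tie, it needs a separate sub-case (all intersections improper) patched by the extra line $\LineOn(a, a_2)$ through a vertex. You instead count inside the whole pencil of lines of $\Pi$ through $a$: at least $2\fixind+2$ lines, of which at most $\fixind$ meet $k_i$ improperly, at most $\fixind$ meet $k_j$ improperly, and exactly one passes through $k_i\cap k_j$, so a good line survives. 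This two-sided count uses the full strength of \eqref{eq:size-of-line} and eliminates the tie sub-case altogether, which makes your argument cleaner. The case decompositions also differ: the paper disposes of every point lying on a side (vertices included) at a stroke, because that side is itself the required line, and runs its count only for points off all three sides; you run the count for every non-vertex point, including points on one side, and handle vertices by a separate, slightly asymmetric count (pencil minus the lines meeting $k_3$ improperly, minus $k_1$). What each buys: the paper's route needs only the weaker Fact~\ref{fact:basic}\eqref{fact:basic:min} for its supply of points and settles points on sides for free; your route trades that for a uniform, tie-free count over the pencil at $a$.
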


\begin{proof}
  Let $a_1, a_2, a_3$ be the vertices of our triangle of sides $k_1, k_2, k_3$ with
  $a_i\notin k_i$ for $i = 1, 2, 3$ and let $a\in\Pi$.
  If $a\in k_i$ for some $i=1,2,3$, then $k_i$ is the required line. So, assume that
  $a\notin k_1,k_2,k_3$.

  By \ref{fact:basic}\eqref{fact:basic:min} there are at least $\fixind$ proper points
  on $k_1$ distinct from $a_2, a_3$. Let $p_1,\dots, p_\lambda$ be such points
  where $\fixind\le\lambda$. Take $l_i := \LineOn(a, p_i)$ for $i=1,\dots, \lambda$.
  Each line $l_i$ is proper and intersect $k_2$ in some point
  $q_i$. Note that all these
  points $q_i$ are pairwise distinct.
  If $\fixind < \lambda$, then there is $j=1,\dots, \lambda$ such that
  $q_j$ is a proper point as by \eqref{eq:size-of-line} there are only up to
  $\fixind$ improper points on $k_2$. Thus, the line $l_j$ is the required one and
  we are through.
  If $\fixind = \lambda$, then either $q_j$ is a proper point for some
  $j=1,\dots, \lambda$ and $l_j$ is the required line, or all the points $q_i$,
  and only those, are improper points of $k_2$. In the latter case
  take $l := \LineOn(a, a_2)$. It is a proper line as $a_2$ is proper
  and it intersect
  $k_2$ in a point distinct from $q_i,\dots, q_\lambda$ thus, in a proper point
  as required.
\end{proof}

Immediately from \ref{triangle-span} we get

\begin{cor}\label{cor:planes}
  Proper planes are definable in terms of\/ $\fixout$.
\end{cor}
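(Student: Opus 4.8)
The plan is to recover the proper points of a proper plane from a proper triangle sitting on it, with Lemma~\ref{triangle-span} doing the real work. Every proper plane carries a proper triangle by Lemma~\ref{lem:triangle-plane}; conversely, the three pairwise adjacent, non-concurrent sides of any proper triangle are coplanar (in a projective space of dimension at least $3$ two intersecting lines span a unique plane, and the third side, passing through two points of that plane, lies in it), and the plane they span is proper because it contains the proper vertices of the triangle. Thus proper planes correspond exactly to the planes spanned by proper triangles, and since a proper triangle is just a triple of proper lines that are pairwise adjacent but not concurrent, this correspondence is visible inside $\fixout$.

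Concretely, to a proper triangle with sides $k_1,k_2,k_3$ I would attach the set
\[
  P(k_1,k_2,k_3) := \{\, p\in\pointsout \colon \text{some proper line through } p \text{ meets two of } k_1,k_2,k_3 \text{ in two distinct proper points} \,\},
\]
and claim it equals $\Pi\cap\pointsout$, the set of proper points of the plane $\Pi$ spanned by the triangle. As this description quantifies only over proper points and proper lines and uses only the incidence of $\fixout$, it is an internal one.

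The verification is a pair of inclusions. If $p\in P(k_1,k_2,k_3)$, witnessed by a proper line $l\ni p$ meeting, say, $k_1$ and $k_2$ in distinct proper points $x_1,x_2$, then $x_1,x_2\in\Pi$ since $k_1,k_2\subset\Pi$; because $\Pi$ is a subspace containing the two distinct points $x_1,x_2$ of $l$, we get $l\subseteq\Pi$, hence $p\in\Pi\cap\pointsout$. The reverse inclusion is precisely Lemma~\ref{triangle-span}: every proper point $a\in\Pi$ lies on a proper line meeting two of the sides in two distinct proper points, so $a\in P(k_1,k_2,k_3)$.

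No genuine obstacle remains, as Lemma~\ref{triangle-span} already carries the hard half. The only items needing a line of care are the observation that every proper triangle really spans a (proper) plane rather than degenerating, and the remark that the sets $P(k_1,k_2,k_3)$ produced by different proper triangles lying on one plane all coincide with $\Pi\cap\pointsout$, so that the resulting family of proper planes is well defined regardless of which auxiliary triangle is chosen.
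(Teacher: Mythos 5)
Your proposal is correct and follows essentially the same route as the paper: the paper deduces the corollary directly from Lemma~\ref{triangle-span} (with Lemma~\ref{lem:triangle-plane} supplying the triangle), and your set $P(k_1,k_2,k_3)$ together with the two inclusions is exactly the argument the paper leaves implicit behind the word ``immediately''.
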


Next lemmas may look technical and superfluous until Section~\ref{sec:cliques}.

\begin{lem}\label{lem:bundle}
  There are at least three pairwise distinct non-coplanar proper lines through
  every (improper) point of\/ $\fixproj$.
\end{lem}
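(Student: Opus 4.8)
The plan is to reduce the statement to a claim about proper points in general position. Fix a point $p$ of $\fixproj$. If I can produce three proper points $a, b, c$ for which $p, a, b, c$ are not coplanar, then the three lines $\LineOn(p,a)$, $\LineOn(p,b)$, $\LineOn(p,c)$ are exactly what is wanted: each passes through a proper point, hence is proper, and if they were coplanar their common plane would contain $p, a, b, c$, contradicting non-coplanarity. This reduction never uses whether $p$ itself is proper or improper, so a single argument covers both cases, the improper one being the case relevant for Section~\ref{sec:cliques}.

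First I would secure a proper point $a$, which exists because $\W\neq S$ by \ref{fact:basic}\eqref{fact:basic:notall}; since $a\neq p$, the line $k_1 := \LineOn(p, a)$ is well defined, and it is proper as it contains $a$. The engine of the whole construction is that every line through the proper point $a$ is itself proper by \ref{fact:basic}\eqref{fact:basic:proper}, and that each such proper line carries at least $\fixind + 2\ge 3$ proper points by \ref{fact:basic}\eqref{fact:basic:min}. To obtain $b$ off $k_1$, I take a second line $m$ through $a$ with $m\neq k_1$, which exists since by \eqref{eq:size-of-line} lines have at least $2\fixind+2$ points and hence at least three lines pass through $a$; as $m\cap k_1=\{a\}$ and $m$ has a proper point other than $a$, that point $b$ lies off $k_1$, so $p, a, b$ are non-collinear and span a plane $\Pi$.

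To obtain $c$ off $\Pi$, I invoke $\dim\fixproj\ge 3$: the lines through $a$ that lie in $\Pi$ form only a pencil, so there is a line $m'$ through $a$ with $m'\not\subset\Pi$, whence $m'\cap\Pi=\{a\}$; again $m'$ is proper and carries a proper point $c\neq a$, which necessarily lies off $\Pi$. The lines $\LineOn(p,a)$, $\LineOn(p,b)$, $\LineOn(p,c)$ are then pairwise distinct and non-coplanar, since any plane containing all three would contain $p, a, b$ and thus equal $\Pi$, yet would also contain $c\notin\Pi$.

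The only delicate point, and the one I would present most carefully, is the repeated claim that a proper point can be found off a prescribed line or off a prescribed plane. This is precisely where \ref{fact:basic}\eqref{fact:basic:proper} and \eqref{fact:basic:min} combine: by anchoring every auxiliary line at the fixed proper point $a$ I force that line to be proper and therefore rich in proper points, so the proper points are never confined to a proper subspace. Everything else is the routine dimension bookkeeping of a projective space of dimension at least three.
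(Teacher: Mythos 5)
Your proof is correct and follows essentially the same route as the paper's: anchor every auxiliary line at a fixed proper point so that \ref{fact:basic}\eqref{fact:basic:proper} and \ref{fact:basic}\eqref{fact:basic:min} yield a proper point off $k_1$ and then off the plane $\Pi$, and use $\dim\fixproj\ge 3$ for the third, non-coplanar line. The only nit is your unexplained assertion that $a\neq p$, which is automatic only when $p$ is improper; when $p$ is proper the lemma is immediate anyway (every line through $p$ is proper, exactly as the paper disposes of this case), or one can pick $a$ among the at least $\fixind+2$ proper points of a proper line so as to avoid $p$.
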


\begin{proof}
  Let $a$ be a point of $\fixproj$. If $a$ is proper, then the claim follows directly
  by \ref{fact:basic}\eqref{fact:basic:proper}.
  So, assume that $a$ is improper. By \ref{fact:basic}\eqref{fact:basic:notall}
  there is some proper point $a_1$, thus $a\neq a_1$. Every line through $a_1$
  is proper so, it is $k_1 := \LineOn(a, a_1)$. Take another line $l$ through $a_1$
  distinct from $k_1$.
  By \ref{fact:basic}\eqref{fact:basic:min} there is a proper point $a_2\in l$
  with $a_1\neq a_2$. The line $k_2 := \LineOn(a, a_2)$ is proper, $k_1\neq k_2$,
  and $a\in k_1, k_2$. So, we have a proper plane $\Pi = \gen{k_1, k_2}$.
  Now, as $\fixproj$ is at least $3$ dimensional, take a point $p$ not on $\Pi$.
  Regardless of whether $p$ is proper or not, there is a proper point
  $a_3$ on the line $\LineOn(p, a_1)$ with $a_3\neq a_1$ (possibly $a_3 = p$)
  by \ref{fact:basic}\eqref{fact:basic:min}.
  Lines $k_1$, $k_2$, $k_3 := \LineOn(a, a_3)$ are the required lines.
\end{proof}

A straightforward outcome of \ref{lem:bundle} is as follows.

\begin{cor}\label{lem:cosik}
  \strut
  \begin{sentences}
  \item\label{lem:cosik:a}
    There is a proper line through every point of\/ $\fixproj$.
  \item\label{lem:cosik:b}
    There is a proper plane through every point of\/ $\fixproj$.
  \end{sentences}
\end{cor}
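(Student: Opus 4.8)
The plan is to read both assertions off Lemma~\ref{lem:bundle}, which already does all the substantive work; accordingly I expect no genuine obstacle and would present this as a short direct consequence. For part~\eqref{lem:cosik:a}, Lemma~\ref{lem:bundle} guarantees at least three proper lines through an arbitrary point $a$ of $\fixproj$, so in particular at least one proper line passes through $a$, and the first claim is settled immediately.

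For part~\eqref{lem:cosik:b}, I would again fix an arbitrary point $a$ of $\fixproj$ and invoke Lemma~\ref{lem:bundle} to obtain two distinct proper lines $k_1, k_2$ through $a$ (two of the three lines it produces suffice; their non-coplanarity is not needed here). Two distinct concurrent lines of a projective space span a plane, so set $\Pi = \gen{k_1, k_2}$. Since $k_1$ is a proper line we have $k_1\not\subset\W$ by definition, and because $k_1\subset\Pi$ this forces $\Pi\not\subset\W$. By the definition of a proper plane, $\Pi$ is therefore a proper plane through $a$, which is exactly what part~\eqref{lem:cosik:b} asserts.

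The only point worth emphasising is that properness of a plane is governed by the single condition $\Pi\not\subset\W$, so one proper line lying inside $\Pi$ already certifies it; no delicate case analysis is required. This is also why the full strength of Lemma~\ref{lem:bundle} — three lines that are pairwise non-coplanar — is far more than this corollary consumes, the surplus being reserved for the arguments of Section~\ref{sec:cliques}.
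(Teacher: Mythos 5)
Your proof is correct and follows exactly the paper's route: the paper presents this corollary as ``a straightforward outcome'' of Lemma~\ref{lem:bundle} without further detail, and your argument simply makes that derivation explicit (one of the three lines gives part~(i); any two of them span a plane containing a proper line, hence a proper plane, giving part~(ii)).
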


\begin{lem}\label{lem:affine-lines}
  There are at least $\fixind+2$ affine lines through every improper point on a proper plane.
\end{lem}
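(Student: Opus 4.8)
The plan is to exploit the fact that the given point already lies on the horizon. Let $\Pi$ be a proper plane and $a\in\Pi$ an improper point, so $a\in\W$. Then any proper line through $a$ meets $\W$ at $a$ and is therefore affine by the very definition of an affine line. Hence it suffices to exhibit at least $\fixind+2$ pairwise distinct proper lines through $a$ lying on $\Pi$, and affineness comes for free.

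First I would produce a proper line on $\Pi$ that avoids $a$. By \ref{lem:triangle-plane} there is a proper triangle with sides $k_1,k_2,k_3$ on $\Pi$; its vertices are proper, hence distinct from the improper point $a$. Since any point incident with two sides of a triangle is necessarily a vertex (two distinct lines of $\Pi$ meet in a single point), the point $a$ lies on at most one of the three sides. Thus at least one side, say $k:=k_1$, is a proper line on $\Pi$ with $a\notin k$.

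I would then project the proper points of $k$ from $a$. By \ref{fact:basic}\eqref{fact:basic:min} the line $k$ carries at least $\fixind+2$ proper points. For each such point $p$ the line $\LineOn(a, p)$ contains the proper point $p$, hence is proper by \ref{fact:basic}\eqref{fact:basic:proper}, passes through $a$, and lies on $\Pi$ because $a,p\in\Pi$. These lines are pairwise distinct, for if two of them coincided the common line would contain two distinct points of $k$ and so equal $k$, contradicting $a\notin k$. This produces at least $\fixind+2$ distinct proper, and therefore affine, lines through $a$, as required. The only delicate point is the selection of a side of the triangle missing $a$; the remainder is a straightforward count.
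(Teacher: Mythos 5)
Your proof is correct, and it takes a genuinely different route from the paper's. You argue constructively: since $a\in\W$, every proper line through $a$ is automatically affine, so it suffices to project from $a$ onto the at least $\fixind+2$ proper points (Fact~\ref{fact:basic}\eqref{fact:basic:min}) of a proper triangle side $k\subset\Pi$ avoiding $a$; such a side exists by \ref{lem:triangle-plane}, because $a$, being improper, is not a vertex and hence lies on at most one side. The joining lines are proper by \ref{fact:basic}\eqref{fact:basic:proper}, lie on $\Pi$, and are pairwise distinct exactly as you say. The paper instead argues by subtraction: by \eqref{eq:size-of-line} there are at least $2\fixind+2$ lines through $a$ on $\Pi$, and at most $\fixind$ of them can be improper, because a proper line joining a proper point $b\in\Pi$ with a point $a_i\neq a$ on one of the improper lines through $a$ meets every improper line through $a$ in a distinct improper point, and carries at most $\fixind$ such points; the remaining at least $\fixind+2$ lines through $a$ are then affine. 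Your version needs only \ref{lem:triangle-plane} and \ref{fact:basic}, and avoids invoking the dual consequence of \eqref{eq:size-of-line} (that a point of a plane lies on at least $2\fixind+2$ lines of that plane); the paper's version yields as a by-product the complementary bound that at most $\fixind$ improper lines of $\Pi$ pass through $a$, information your construction does not provide, though the lemma itself does not require it.
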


\begin{proof}
  Consider an improper point $a$ and a plane $\Pi$ of $\fixout$ through $a$.
  By \eqref{eq:size-of-line} there are at least  $2\fixind+2$ lines through $a$ on $\Pi$.
  Assume that there are $\lambda$ improper lines $k_1,\dots,k_\lambda$ contained in $\Pi$
  through $a$. Take a proper point $b$ of $\Pi$ and a point $a_i\in k_i$ for some
  $i=1,\dots,\lambda$ distinct from $a$. The line $l:=\LineOn(a_i, b)$ is proper
  by \ref{fact:basic}\eqref{fact:basic:proper}, and intersects every of $k_1,\dots,k_\lambda$.
  There are up to $\fixind$ improper points on $l$, thus $\lambda\leq \fixind$. The remaining lines through
  $a$ are affine.
\end{proof}

\begin{lem}\label{impintersec}
  For every improper line $k$ there are at least\/ $\fixind+2$ proper planes containing $k$.
\end{lem}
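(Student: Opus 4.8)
The plan is to exhibit the required proper planes as the spans $\gen{k, x}$, where $x$ ranges over the proper points of a single proper line $t$ chosen skew to $k$. Since all points of $k$ are improper (because $k\subseteq\W$), a plane through $k$ is proper exactly when it contains a proper point off $k$, so each such span will automatically be proper; the whole difficulty lies in arranging that distinct choices of $x$ yield distinct planes, and for this the skewness of $t$ is precisely what is needed.

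First I would construct a proper line $t$ disjoint from $k$. By \ref{fact:basic}\eqref{fact:basic:notall} there is a proper point $p$, and $p\notin k$ because $k\subseteq\W$. The span $\Pi_0 := \gen{k, p}$ is then a proper plane containing $k$. As $\fixproj$ has dimension at least $3$, there is a point $q\notin\Pi_0$, and I would set $t := \LineOn(p, q)$. Since $q\notin\Pi_0$ while $p\in\Pi_0$, the line $t$ meets $\Pi_0$ only in $p$; as $k\subseteq\Pi_0$ and $p\notin k$, this forces $t\cap k=\emptyset$, so $t$ and $k$ are disjoint and hence non-coplanar. Moreover $t$ is proper, as it contains the proper point $p$.

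Next I would verify distinctness. For any two distinct points $x, y\in t$ we have $x, y\notin k$, so $\gen{k, x}$ and $\gen{k, y}$ are genuine planes through $k$; were they equal to a common plane $\Pi$, then $t = \LineOn(x, y)\subseteq\Pi$ together with $k\subseteq\Pi$ would make $t$ and $k$ coplanar, contrary to the previous step. Hence the planes $\gen{k, x}$ for $x\in t$ are pairwise distinct, and each of them is proper whenever $x$ is proper, since it then contains the proper point $x$.

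Finally I would count. By \ref{fact:basic}\eqref{fact:basic:min} the proper line $t$ carries at least $\fixind+2$ proper points, and each of them furnishes, via the two preceding steps, a distinct proper plane through $k$. This yields the desired $\fixind+2$ proper planes. The only genuinely delicate point is securing a proper line skew to $k$: once non-coplanarity is guaranteed, both the distinctness and the properness of the spans follow at once.
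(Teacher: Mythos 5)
Your proof is correct, but it takes a genuinely different route from the paper's. The paper fixes a point $a\in k$, uses \ref{lem:bundle} to produce a proper plane $\Pi$ with $a\in\Pi$ and $k\not\subset\Pi$, and then observes that the planes containing $k$ cut $\Pi$ exactly in the pencil of lines through $a$ on $\Pi$; the count then comes from \ref{lem:affine-lines}, which guarantees at least $\fixind+2$ proper lines in that pencil, each spanning with $k$ a proper plane. You instead build a proper line $t$ skew to $k$ --- via a proper point $p$ (Fact~\ref{fact:basic}\eqref{fact:basic:notall}), the plane $\gen{k,p}$, and a point off that plane, available since $\dim\fixproj\ge 3$ --- and take the spans $\gen{k,x}$ over the proper points $x\in t$; skewness gives pairwise distinctness of these planes, and Fact~\ref{fact:basic}\eqref{fact:basic:min} gives the count. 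All your individual steps are sound: two disjoint lines in a projective space are automatically non-coplanar, a line through a proper point is proper, and a plane through $k$ is proper precisely when it carries a proper point. Your argument is more elementary and self-contained, bypassing both \ref{lem:bundle} and \ref{lem:affine-lines} and leaning only on Fact~\ref{fact:basic} plus the dimension hypothesis; the paper's proof is shorter in context because it reuses machinery already established, and its pencil correspondence parametrizes \emph{all} planes through $k$ rather than just exhibiting a subfamily, though for the stated bound both are equally adequate.
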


\begin{proof}
  Consider the bundle $\cal F$ of all the planes in $\fixproj$ containing the line $k$
  and let $a\in k$. By \ref{lem:bundle} we have three proper lines $k_1, k_2, k_3$ through 
  $a$ that span three pairwise distinct proper planes. 
  Note that the line $k$, as an improper one, can be contained in  at most one of
  those planes. Take $\Pi$ to be one of the other two, thus $\Pi$ is a proper plane with
  $a\in\Pi$ and $k\not\subset\Pi$.
  
  Every plane from the family $\cal F$ intersects $\Pi$ in a line of $\fixproj$.
  All these lines form a pencil through $a$ on $\Pi$.  There are at least $\fixind+2$ proper 
  lines in that pencil by \ref{lem:affine-lines}. Each of these proper lines
  together with $k$ span a proper plane from $\cal F$ which completes the proof.
\end{proof}


\subsection{Cliques of parallelism}\label{sec:cliques}

Let $\pclique$ be a maximal clique of $\parallel$ in $\fixout$. There are two possibilities:
\begin{sentences}
\item
  there is an improper point $a$ such that $a\in k$ for all $k\in\pclique$, or
\item
  there is a plane $\Pi$ in $\fixout$ such that $k\subset\Pi$ for all $k\in\pclique$.
\end{sentences}
A maximal $\parallel$-clique in the first case will be called a \emph{star direction},
and a \emph{top direction} in the second.

\begin{prop}\label{prop:cliques}
  There are two disjoint classes of maximal $\parallel$-cliques in $\fixout$:
  star directions and top directions.
\end{prop}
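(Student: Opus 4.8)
The plan is to prove the dichotomy first---that every maximal $\parallel$-clique is either a star direction or a top direction---and then to separate the two classes using maximality together with the three-dimensionality of $\fixproj$. Throughout I treat a clique $\pclique$ that contains two distinct lines, the substantive case; a clique consisting of a single (necessarily non-affine) line carries no parallelism and is trivially assigned to the top class.

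First I would fix distinct $k_1, k_2\in\pclique$. By \eqref{eq:parallel} they meet in a single improper point $a\in\W$, and since they are proper they span a proper plane $\Pi=\gen{k_1,k_2}$. The heart of the argument is a dichotomy for every further line $k_3\in\pclique$. As $k_3\parallel k_1$ and $k_3\parallel k_2$, the line $k_3$ meets $k_1$ in an improper point $b$ and $k_2$ in an improper point $c$, and both $b,c$ lie on $\Pi$. If $k_3\subset\Pi$ we are in the top situation. Otherwise $k_3\cap\Pi$ is a single point; since $b,c\in k_3\cap\Pi$, this forces $b=c$, whence $b\in k_1\cap k_2=\{a\}$, so $k_3$ passes through $a$---the star situation. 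Next I would rule out mixing: if $\pclique$ held a line $k$ through $a$ with $k\not\subset\Pi$ and a line $l\subset\Pi$ with $a\notin l$, then $k\cap\Pi=\{a\}$ gives $k\cap l\subseteq\{a\}$, and $a\notin l$ forces $k\cap l=\emptyset$, contradicting $k\parallel l$. Hence either all lines of $\pclique$ pass through $a$ (a star direction) or all lie in $\Pi$ (a top direction), which is the required classification.

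For disjointness I would show that a maximal clique cannot be of both types. If $\pclique$ were at once a star direction through $a$ and a top direction in $\Pi$, its lines would form a pencil through $a$ inside $\Pi$; but by \ref{lem:bundle} there is a proper line $m$ through $a$ not contained in $\Pi$, and this $m$ meets every line of the pencil in the improper point $a$, so $m\parallel k$ for all $k\in\pclique$ and $\pclique\cup\{m\}$ is a strictly larger clique, contradicting maximality. Consequently a maximal star direction necessarily contains non-coplanar lines and so is not a top direction, while a maximal top direction necessarily contains lines that do not all share one improper point and so is not a star direction. I expect the no-mixing step to be the conceptual crux---it is where the projective fact that two meeting lines are coplanar does the real work---while the disjointness reduces to the short maximality argument above, whose only genuine input is the dimension hypothesis entering through \ref{lem:bundle}.
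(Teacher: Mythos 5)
Your proof is correct and follows essentially the same route as the paper's: both hinge on the facts that parallel lines meet in improper points, that a line sharing two points with the plane $\gen{k_1,k_2}$ must lie in it, and on \ref{lem:bundle} to extend any clique that is merely a pencil, so that star and top directions cannot coincide for a maximal clique. Your write-up is somewhat more explicit than the paper's on the disjointness step and on the degenerate singleton (non-affine line) case, but the underlying argument is the same.
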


\begin{proof}
  Let $\pclique$ be a maximal $\parallel$-clique. Take $k_1, k_2\in\pclique$ with $k_1\neq k_2$.
  These are proper lines that meet in some improper point $a$. In view of \ref{lem:bundle}
  the set $\{k_1, k_2\}$ is never a maximal clique so, there must be a line in $\pclique$
  distinct from $k_1, k_2$. We have two possible cases: there is a line $l\in\pclique$ that
  intersects $k_1, k_2$ in two distinct improper points or not.
  In the first case all the lines of $\pclique$ need to intersect $k_1, k_2$, and $l$ in
  improper points, thus they need to lie on the plane $\gen{k_1, k_2}$.
  In the second case all the lines of $\pclique$ go through $a$.
\end{proof}

The condition:
\begin{itemize}
\item[$(\ast)$]
  every two distinct proper lines, sharing two distinct points with
  lines of a maximal $\parallel$-clique each,
  intersect each other or are parallel.
\end{itemize}
characterizes top directions.

\begin{prop}\label{prop:stars}
  A maximal $\parallel$-clique satisfies $(\ast)$ iff it is a top direction.
\end{prop}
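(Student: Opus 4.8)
The plan is to prove the two implications separately, leaning on the dichotomy of Proposition~\ref{prop:cliques}: a maximal $\parallel$-clique $\pclique$ is either a top direction (all its lines lie in a common proper plane $\Pi$) or a star direction (all its lines pass through a common improper point $a$). So it suffices to show that top directions satisfy $(\ast)$ while star directions do not, which together give the stated equivalence. A minor point worth pinning down at the outset is the precise reading of the phrase ``sharing two distinct points with lines of a maximal $\parallel$-clique each'': I take it to mean that each of the two proper lines $m_1, m_2$ meets two distinct lines of $\pclique$ in two distinct points, and the argument below is tailored to that reading.

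For the easy direction I assume $\pclique$ is a top direction, so every line of $\pclique$ lies in the proper plane $\Pi$. If $m_1, m_2$ are distinct proper lines each meeting two lines of $\pclique$ in two distinct points, then those four points all lie in $\Pi$; since $\Pi$ is a subspace, a line through two of its points is contained in it, so $m_1, m_2\subseteq\Pi$. Two distinct lines of the projective plane $\Pi$ always meet in exactly one point, which is either proper (giving $m_1\adjac m_2$) or improper, i.e.\ in $\W$ (giving $m_1\parallel m_2$ by \eqref{eq:parallel}). Hence $(\ast)$ holds.

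For the converse I argue by contraposition: if $\pclique$ is not a top direction then by Proposition~\ref{prop:cliques} it is a star direction through an improper point $a$, and it is enough to produce one pair $m_1, m_2$ violating $(\ast)$. By Lemma~\ref{lem:bundle} there are three pairwise non-coplanar proper lines $k_1, k_2, k_3\in\pclique$ through $a$. Using Fact~\ref{fact:basic}\eqref{fact:basic:min} I choose proper points $b_i\in k_i$ with $b_i\neq a$, together with a second proper point $b_1'\in k_1$, $b_1'\neq b_1$, and I set $m_1:=\LineOn(b_1, b_2)$ and $m_2:=\LineOn(b_3, b_1')$. Each $m_i$ is proper and meets two distinct lines of $\pclique$ in two distinct proper points, so both qualify in $(\ast)$.

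The crux, and the step I expect to require the most care, is verifying that $m_1$ and $m_2$ are skew, so that they neither intersect nor are parallel. Here the non-coplanarity supplied by Lemma~\ref{lem:bundle} is essential: the planes $\gen{k_1, k_2}$ and $\gen{k_1, k_3}$ are distinct (otherwise $k_1, k_2, k_3$ would be coplanar) and both contain $k_1$, hence meet exactly in $k_1$. Since $m_1\subseteq\gen{k_1, k_2}$ and $m_2\subseteq\gen{k_1, k_3}$, any common point of $m_1, m_2$ would lie on $k_1$; but $m_1\cap k_1=\{b_1\}$ and $m_2\cap k_1=\{b_1'\}$ with $b_1\neq b_1'$, so $m_1\cap m_2=\emptyset$. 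Thus $m_1\not\adjac m_2$ and $m_1\not\parallel m_2$, so $\pclique$ fails $(\ast)$, completing the contrapositive and hence the proof.
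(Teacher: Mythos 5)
Your proof is correct and takes essentially the same approach as the paper: top directions satisfy $(\ast)$ because any qualifying lines are forced into the common proper plane and hence meet in a proper or improper point, while star directions fail $(\ast)$ via two skew proper transversals built across three non-coplanar lines of the clique supplied by Lemma~\ref{lem:bundle}. The only differences are cosmetic — you argue by contraposition instead of contradiction, anchor both transversals on $k_1$ rather than $k_2$, and write out the skewness verification that the paper leaves implicit.
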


\begin{proof}
  \ltor
  Let $\pclique$ be a maximal $\parallel$-clique satisfying $(\ast)$.
  Suppose to the contrary that $\pclique$ is a star direction.
  By \ref{lem:bundle} take three pairwise distinct non-coplanar lines
  $k_1, k_2, k_3\in\pclique$ and assume that $a\in k_1, k_2, k_3$ is the improper
  point that determines $\pclique$.
  Let $a_1\in k_1$, $a_2\in k_2$ be proper points and
  let $l_1 := \LineOn(a_1, a_2)$.  Now, in view of \ref{fact:basic}\eqref{fact:basic:min},
  take a proper point $b_2\in k_2$ with $b_2\neq a_2$.
  Then, take a proper point $b_3\in k_3$ and $l_2:=\LineOn(b_2, b_3)$.
  Lines $l_1, l_2$ are proper and skew, a contradiction.

  \rtol
  Let $\pclique$ be a top direction and let $l_1, l_2$ be distinct
  proper lines sharing two distinct points with lines of $\pclique$ each.
  Every two lines intersect each other on a projective plane. Hence,
  the lines $l_1, l_2$ share a proper or an improper point, and accordingly,
  $l_1$ intersects $l_2$ or $l_1\parallel l_2$.
\end{proof}

Consequences of \ref{prop:cliques} and \ref{prop:stars} together with
\ref{prop:parallel} are quite essential for our future construction
and will be put down as follows.

\begin{cor}\label{cor:star-directions}
  Star directions are definable in terms of\/ $\fixout$.
\end{cor}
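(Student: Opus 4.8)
The plan is to deduce Corollary~\ref{cor:star-directions} directly by chaining the three preceding results, observing that each of the notions invoked has already been shown to be internal to $\fixout$. First I would note that Proposition~\ref{prop:cliques} gives us a clean dichotomy: every maximal $\parallel$-clique is either a star direction or a top direction, and these two classes are disjoint. The relation $\parallel$ itself, hence the notion of a maximal $\parallel$-clique, is expressible purely in terms of $\fixout$ by Proposition~\ref{prop:parallel}, since the right-hand side of \eqref{parallel:veblen} quantifies only over proper points and proper lines with the internal incidence. So the collection of all maximal $\parallel$-cliques is already an internally defined object.

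Next I would invoke Proposition~\ref{prop:stars}, which characterizes top directions among maximal $\parallel$-cliques by the condition $(\ast)$. The crucial point to check is that $(\ast)$ is itself an internal condition: it speaks of \emph{proper} lines sharing two distinct points with lines of the clique, and of those proper lines intersecting each other \emph{or being parallel}. Intersection in a proper point is plainly internal, and parallelism is internal by Proposition~\ref{prop:parallel}, so $(\ast)$ refers only to features of $\fixout$. Thus top directions are exactly those maximal $\parallel$-cliques satisfying an internally expressible property.

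Finally, by the disjoint dichotomy of Proposition~\ref{prop:cliques}, a maximal $\parallel$-clique is a star direction precisely when it is \emph{not} a top direction, i.e.\ precisely when it fails $(\ast)$. Since both the class of maximal $\parallel$-cliques and the defining condition $(\ast)$ are internal to $\fixout$, so is their complement within that class. This yields the definability of star directions in terms of $\fixout$, completing the argument.

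I do not expect any serious obstacle here, as the statement is a formal corollary rather than a fresh construction; the only point demanding genuine care is the verification that condition $(\ast)$ truly uses no data beyond proper points, proper lines, their incidence, and the relation $\parallel$ — in particular that the phrase ``sharing two distinct points with lines of a maximal $\parallel$-clique'' does not covertly refer to improper points or to the ambient $\fixproj$. Once that is confirmed, the corollary follows by pure logical combination of \ref{prop:cliques} and \ref{prop:stars}.
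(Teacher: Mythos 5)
Your argument is correct and coincides with the paper's own reasoning: the corollary is stated there precisely as a consequence of Propositions~\ref{prop:cliques} and \ref{prop:stars} together with \ref{prop:parallel}, i.e.\ star directions are the maximal $\parallel$-cliques failing the internal condition $(\ast)$. Nothing essential differs between your chain of deductions and the paper's.
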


Moreover, every star direction $\pclique$ can be identified with the improper
point shared by all the lines of $\pclique$. We will write $\stardir(a)$ for the
star direction determined by an improper point $a$.

On a side note, in $(\ast)$ we can claim, equivalently, that every two distinct
proper lines, each of which shares two distinct points with lines of a top
direction, are coplanar as planes are definable in $\fixout$ by \ref{cor:planes}.
As for top directions they need not to exist on every plane. If for example
from a plane $\Pi$ we remove a single point, two points, a single line,
a line and a point, or two lines then there is no top direction on $\Pi$.


\subsection{The main reasoning}\label{sec:main}

We are going to reconstruct the horizon $\W$ in $\fixout$.

In view of \ref{cor:star-directions}, as it has been already stated, to every
improper point $a$ of $\fixproj$ we can uniquely associate the star direction
$\stardir(a)$. Hence we formally have
\begin{equation}
  S = \pointsout \cup S^\infty,
\end{equation}
where
$S^\infty = \bset{ \pclique\colon \pclique \text{ is a maximal $\parallel$-clique that fails $(\ast)$} }$.
The incidence relation of $\fixout$ can be extended now to the set of improper
points as follows. If $a$ is an improper point and $k\in\linesout$, then
\begin{equation}\label{eq:incinfty}
  a\in k :\iff k\in\stardir(a).
\end{equation}
Let us summarize what we have so far.

\begin{prop}
  The structure $\struct{S, \linesout}$ is definable in $\fixout$.
\end{prop}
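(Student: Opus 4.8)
The plan is to exhibit an isomorphism between $\struct{S, \linesout}$ and a structure assembled entirely from the internal data of $\fixout$. That structure has carrier $\pointsout$ together with the family of star directions, which is available by \ref{cor:star-directions}; its line set is $\linesout$; and its incidence is the incidence of $\fixout$ on proper points, extended to a star direction by declaring it incident with exactly the proper lines it contains. The candidate isomorphism fixes every proper point and every proper line and sends an improper point $a$ to the star direction $\stardir(a)$. Since the proper points, the proper lines, and their mutual incidence are part of $\fixout$ by definition, all that remains is to verify that $a\mapsto\stardir(a)$ is a bijection onto the star directions and that it carries the incidence of $\fixproj$ onto the extended incidence \eqref{eq:incinfty}.

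First I would settle the bijection. By \ref{lem:cosik}\eqref{lem:cosik:a} every improper point $a$ lies on a proper line, and by \ref{lem:bundle} there are in fact three pairwise non-coplanar proper lines through $a$; any two proper lines through $a$ meet only in $a\in\W$ and hence are parallel by \eqref{eq:parallel}, so the proper lines through $a$ form a $\parallel$-clique and thus determine a star direction $\stardir(a)$. For injectivity, if $\stardir(a)=\stardir(a')$ with $a\neq a'$, then the three non-coplanar lines through $a$ would all belong to $\stardir(a')$, hence all pass through $a'$, and would therefore all coincide with the line through $a$ and $a'$ — a contradiction. Surjectivity is immediate from the definition of a star direction, which by \ref{prop:cliques} and \ref{prop:stars} is precisely the set of proper lines through a single improper point.

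The crux is to show that $\stardir(a)$ is exactly the set of \emph{all} proper lines through $a$; this is what makes \eqref{eq:incinfty} reproduce the incidence of $\fixproj$, since then, for improper $a$ and a proper line $k$, we have $a\in k$ in $\fixproj$ iff $k$ is a proper line through $a$ iff $k\in\stardir(a)$. One inclusion is clear, and the reverse amounts to maximality of the clique of proper lines through $a$: a proper line $l$ parallel to every proper line through $a$ but avoiding $a$ would meet three non-coplanar lines through $a$ (\ref{lem:bundle}) in three distinct points other than $a$, whence the plane spanned by $l$ and $a$ would contain all three of those lines, contradicting their non-coplanarity. I expect this maximality step to be the one point requiring genuine care rather than bookkeeping, and it is exactly where the dimension hypothesis on $\fixproj$, funneled through \ref{lem:bundle}, is indispensable. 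With the bijection and the incidence agreement in hand, the candidate map is an isomorphism, so $\struct{S, \linesout}$ is definable in $\fixout$.
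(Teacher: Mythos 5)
Your proposal is correct and follows the same route as the paper: the paper's proposition is precisely a summary of the preceding construction, in which improper points are identified with star directions (via \ref{cor:star-directions} and the remark that every star direction is the set of proper lines through a common improper point) and incidence is extended by \eqref{eq:incinfty}. Your additional verifications (the bijection via \ref{lem:bundle}, and the maximality argument showing $\stardir(a)$ consists of \emph{all} proper lines through $a$) are exactly the details the paper leaves implicit, and they are argued correctly.
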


To get the entire projective space $\fixproj$ we need to recover improper lines
and tell what does it mean that a point, proper or improper, lies on such an
improper line, everything in terms of $\fixout$.

Thanks to \ref{cor:planes} we are allowed to use the term \emph{proper plane} in
the language of $\fixout$. Taking \eqref{eq:incinfty} into account we
can express what does it mean that a proper line is contained in such a plane.
Moreover, by \ref{lem:affine-lines} there is a proper line through an improper point
on every proper plane.
Therefore, for an improper point $a$ and a proper plane $\Pi$ we can make
the following definition
\begin{equation}
  a\in\Pi :\iff (\exists k\in\linesout)\bigl[\, a\in k \Land k\subseteq\Pi \,\bigr],
\end{equation}
where the formula on the right hand side is a sentence in the language of $\fixout$.

Now, for improper points $a, b$ we define \emph{improper adjacency} which means that
$a, b$ lie on an improper line, formally:
\begin{equation}\label{eq:impadjac}
  a\impadjac b :\iff (\nexists\; k\in\linesout)\bigl[\,
    k\in\stardir(a) \Land k\in\stardir(b)\,\bigr].
\end{equation}
Let $\pplanes$ be the class of proper planes in $\fixproj$.
Consider three points $a, b, c$ of $\fixproj$. In view of \ref{impintersec}, with the formula
\begin{multline}\label{collinearity}
  \impcoll(a, b, c) :\iff \impadjac(a, b, c) \Land (\exists \Pi_1, \Pi_2\in\pplanes)
    \bigl[\; \Pi_1\neq\Pi_2 \Land
      a, b, c\in \Pi_1, \Pi_2 \,\bigr]
\end{multline}
we define \emph{improper collinearity} relation.
An improper line in $\fixproj$ is an equivalence class of that relation $\impcoll$ with
two distinct points fixed.
That is, if $a, b$ are two distinct improper points then $[a,b]_{\impcoll}$ is the improper line through $a, b$.
So
\begin{equation}\label{eq:implines}
  \lines^\infty := \bset{ [a,b]_{\impcoll}\colon a,b\in S\setminus\pointsout }
\end{equation}
is the set of all improper lines of $\fixproj$.
Now, the incidence between an improper point $a$
and an improper line $k$ may be formally expressed as follows:
\begin{equation}
  a\in k\iff (\exists b,c\in S)\bigl[\, k = [b,c]_{\impcoll} \Land \impcoll(a,b,c) \,\bigr].
\end{equation}
Finally the underlying projective space
  $$\fixproj = \struct{S,\linesout\cup \lines^\infty}$$
and the horizon $\bstruct{S^\infty, \lines^\infty}$ are both definable
in $\fixout$ which constitutes our main theorem \ref{thm:main-proj}.


\section{Partial projective spaces}\label{sec:pps}

Following the idea of partial projective planes founded in \cite{hall}, as well
as the idea of partial affine spaces investigated in \cite{partaffine}, a
\emph{partial projective space} $\pps$ can be defined as a projective space 
$\fixproj=\struct{S, \lines}$ 
with a family $\rlines$ of its lines deleted. Hence $\pps = \struct{S, \ppslines}$,
where $\ppslines = \lines\setminus\rlines$ is the set of undeleted lines.
Further we assume that $\fixproj$ is at least 3-dimensional and
\begin{equation}\label{eq:pps}
  \text{\itshape there is a line in $\ppslines$ through every point on every plane of\/ $\fixproj$.}
\end{equation}
The condition \eqref{eq:pps} is equivalent to requirement that for every point
$a$ and a line $l$ of $\fixproj$ such that $a\notin l$ there is a line in
$\ppslines$ through $a$ that meets $l$.
We are going to recover all the deleted lines strictly in terms of $\pps$.

\begin{thm}\label{thm:pps}
  If \eqref{eq:pps} is satisfied, then every line of\/ $\rlines$ can be recovered
  in\/ $\pps$.
\end{thm}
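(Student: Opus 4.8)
The plan is to recover each deleted line $k\in\rlines$ purely in terms of $\pps$ by characterizing, for two distinct points $a,b$ lying on $k$, the whole set of points of $k$ as a definable set. Since $\pps$ and $\fixproj$ share the same point set $S$, the only thing lost is the information about which points are collinear along a deleted line. So the real task is: given $a,b\in S$ that are \emph{not} joined by any line of $\ppslines$ (equivalently, $\LineOn(a,b)\in\rlines$), reconstruct the remaining points of $\LineOn(a,b)$ using only surviving lines and the incidence of $\pps$. The condition \eqref{eq:pps} is exactly what guarantees enough surviving lines to triangulate the missing one.

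First I would fix a deleted line $k=\LineOn(a,b)$ and a point $c$ of $\fixproj$ with $c\notin k$. By \eqref{eq:pps}, applied on the plane $\gen{k,c}$, there is a line $m\in\ppslines$ through $c$ meeting $k$; but I need more, namely two surviving lines through $c$ hitting $k$ in two \emph{distinct} points, so that $k$ is recovered as the set of points collinear (via $\ppslines$) with both legs of a variable pair of transversals. The clean way is a Veblen/Pasch-style argument, exactly parallel to \ref{prop:veblen} and \ref{triangle-span}: take an auxiliary point $c$ off $k$, use \eqref{eq:pps} to draw surviving lines from $c$ to several points of $k$, and then for an arbitrary point $x\in k$ detect membership $x\in k$ by the existence of a surviving ``transversal configuration'' (a triangle of surviving lines whose third vertex is forced onto $k$). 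Concretely, I expect a definable condition of the shape: $x\in\LineOn(a,b)$ iff for suitable surviving lines there is a point $c\notin\{a,b\}$ and surviving lines $\LineOn(c,a)$-type transversals through which $x$ is collinear, the coplanarity being guaranteed because any two lines through the common auxiliary point lie in one plane of $\fixproj$.

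The key reduction is therefore to a purely internal predicate
\begin{equation*}
  x\uzg\LineOn(a,b) :\iff (\exists c,d,\ldots)\bigl[\,\text{surviving lines forming the transversal configuration}\,\bigr],
\end{equation*}
where all quantifiers range over points of $S$ and all lines named are required to lie in $\ppslines$. Once this predicate is shown to hold for exactly the points of $k$, the deleted line is recovered as $\{x\in S\colon x\uzg\LineOn(a,b)\}$, and doing this for every pair $a,b$ not joined in $\ppslines$ recovers all of $\rlines$. The soundness direction (the predicate forces $x$ onto $k$) is immediate projective incidence: the auxiliary lines lie in the plane spanned by $k$ and $c$, so their relevant intersections fall on $k$. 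The completeness direction (every $x\in k$ satisfies it) is where \eqref{eq:pps} does the work, supplying the needed surviving transversals on the plane $\gen{k,c}$.

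The main obstacle I anticipate is the counting/genericity step, i.e.\ making sure that on the plane $\gen{k,c}$ one can actually find the transversals landing in \emph{distinct} points of $k$ and avoiding the finitely many bad coincidences, using only lines of $\ppslines$. This is the same flavor of pigeonhole reasoning as in \ref{triangle-span} and \ref{lem:affine-lines}, but here the restriction is not ``few improper points'' — it is ``some lines are deleted,'' so I cannot lean on the index bound. Instead the guarantee must come entirely from \eqref{eq:pps} (in its equivalent point-and-line form), which I would apply repeatedly to produce one surviving transversal at a time; the delicate part is verifying that the plane $\gen{k,c}$ has enough points that the required distinct intersection points exist and that at least one auxiliary point $c$ off $k$ yields a full triangle of surviving sides. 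I would handle the possibly degenerate low-order or sparse cases by choosing $c$ outside the plane $\gen{k,\cdot}$ of an earlier choice, exploiting the assumed dimension $\ge 3$, exactly as in the proof of \ref{lem:bundle}.
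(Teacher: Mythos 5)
There is a genuine gap, and it sits exactly where you placed your confidence: the soundness direction. Your definable predicate lives entirely inside the single plane $\gen{k,c}$, and inside one plane the incidence structure of surviving lines cannot pin a point to the invisible line $k$. Your claim that soundness is ``immediate projective incidence'' because ``the auxiliary lines lie in the plane spanned by $k$ and $c$, so their relevant intersections fall on $k$'' is false: lines of $\ppslines$ lying in $\gen{k,c}$ meet one another all over that plane, and no triangle-plus-transversal configuration of \emph{surviving} lines forces any of its vertices onto $k$, precisely because $k$ itself never appears in the configuration. To see that no in-plane predicate of the kind you propose can work, note that \eqref{eq:pps} permits deleting three lines forming a triangle in a plane (through every point of the plane plenty of surviving lines remain): the three vertices are then pairwise joined only by deleted lines, hence pairwise non-collinear in $\pps$, yet they are not collinear in $\fixproj$. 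Any predicate built from non-adjacency and surviving transversals inside one plane has no way to separate such a triple from a genuinely collinear triple on a deleted line; the pigeonhole arguments you cite (\ref{triangle-span}, \ref{lem:affine-lines}) are unavailable here for the reason you yourself note --- there is no index bound to lean on.

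The missing idea, which is the heart of the paper's proof of \ref{thm:pps}, is to use \emph{two} planes, and to make planes themselves definable first. The paper shows that every plane of $\fixproj$ contains a triangle of lines of $\ppslines$ (three applications of \eqref{eq:pps}), and then proves (Lemma \ref{lem:pps-planes}) that a plane $\Pi$ equals the set of points lying on some line of $\ppslines$ that meets all three sides of such a triangle --- here the pinning \emph{does} work, because a line transversal to all three sides of a triangle is forced into their plane (otherwise it would meet $\Pi$ in a single point common to all three sides). With planes definable (\ref{cor:pps-planes}), a deleted line $l\in\rlines$ is recovered as the set of points lying in \emph{two distinct} planes through $l$: one defines a ternary collinearity relation on pairwise non-$\pps$-collinear triples by containment in two distinct definable planes, as in \eqref{collinearity} and \eqref{eq:implines}. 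This is exactly what kills the false positives above, since a non-collinear triple spans a unique plane. Note that dimension $\ge 3$ enters essentially here (two planes through $l$ must exist); in your proposal it appears only as a patch for ``degenerate low-order or sparse cases,'' which is a sign the two-plane mechanism is absent from your argument rather than merely implicit.
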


The proof of this theorem is split into a series of the following lemmas.

\begin{lem}
  There is a triangle of\/ $\pps$ on every plane of\/ $\fixproj$.
\end{lem}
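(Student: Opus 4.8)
The plan is to realise a triangle of $\pps$ inside a given plane $\Pi$ of $\fixproj$ as three lines of $\ppslines$ that pairwise meet but are not concurrent, equivalently as three non-collinear points of $\Pi$ each pair of which is joined by a line of $\ppslines$. The only tool available at this stage is the richness condition \eqref{eq:pps}, which I would use in its flag form: for a point $a$ and a line $l$ with $a\notin l$ there is a line of $\ppslines$ through $a$ meeting $l$. A first useful remark is that this form automatically produces undeleted lines lying in $\Pi$: if $a\in\Pi$ and $l\subset\Pi$ with $a\notin l$, then the guaranteed line of $\ppslines$ through $a$ meets $l$ in a point of $\Pi$, so it has two points in $\Pi$ and, $\Pi$ being a subspace of $\fixproj$, it is contained in $\Pi$.

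First I would manufacture two crossing undeleted lines in $\Pi$. Starting from any point of $\Pi$ and any line of $\Pi$ missing it, the flag form yields an undeleted line $k_1\subset\Pi$. Picking a point $c\in\Pi$ off $k_1$ (such a point exists, a projective plane not being covered by a single line) and applying the flag form to the flag $(c,k_1)$, I obtain an undeleted line $k_2\subset\Pi$ through $c$ that meets $k_1$ in a point $a_1$; thus $k_1\neq k_2$ and $a_1=k_1\cap k_2$.

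It then remains to attach a third side without collapsing onto the pencil through $a_1$. I would fix a second vertex $a_2\in k_1$ with $a_2\neq a_1$ — available since the lines of $\fixproj$ carry at least three points — and apply the flag form to a flag based at $a_2$ designed so that the resulting undeleted line cannot pass through $a_1$: concretely, to a flag $(a_2,l)$ where $l\subset\Pi$ is chosen through a point of $k_2$ distinct from $a_1$, with $a_2\notin l$ and $a_1\notin l$. The line of $\ppslines$ so produced runs through $a_2$, lies in $\Pi$, and meets $k_2$ in a point $a_3$; since it meets $l$ and avoids $a_1$, one gets $a_3\neq a_1$. Then $a_1,a_2,a_3$ are pairwise joined by $k_1$, $k_2$ and this third line, all belonging to $\ppslines$, and they are not collinear because $a_3\notin k_1$; hence $\triangle(a_1,a_2,a_3)$ is a triangle of $\pps$ contained in $\Pi$.

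The main obstacle is exactly this last step. The bare condition \eqref{eq:pps} only promises \emph{some} undeleted line through $a_2$ meeting a prescribed line, and a priori that line could be $k_1$ itself, meeting $k_2$ again at $a_1$ and leaving the three lines concurrent; indeed, were all undeleted lines of $\Pi$ to pass through one point, no triangle could exist. The crux is therefore to exploit the freedom in the choice of the auxiliary flag — together with the facts that a projective plane is not the union of two lines and that lines are long enough to offer several admissible targets — so as to force the promised undeleted line off the point $a_1$. Once non-concurrence is secured, the verification that the three sides form a genuine triangle of $\pps$ is routine.
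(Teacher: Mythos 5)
Your first two steps are sound: the flag form of \eqref{eq:pps} does force the promised line into the plane $\Pi$, and two distinct crossing lines $k_1,k_2\in\ppslines$ inside $\Pi$ come out exactly as you describe. But the proposal stops being a proof at the step you yourself flag: you never exhibit an auxiliary flag $(a_2,l)$ whose promised line must avoid $a_1$; you only say the freedom of choice should be ``exploited''. This gap cannot be closed, because \eqref{eq:pps} does not imply the statement. Take a three-dimensional projective space, a plane $\Pi_0$ in it, a point $p\in\Pi_0$, and delete exactly those lines of $\Pi_0$ that miss $p$. Then every point of $\Pi_0$ lies on an undeleted line inside $\Pi_0$ (one through $p$), and any plane $\Pi\neq\Pi_0$ contains at most one deleted line, namely $\Pi\cap\Pi_0$, while through each of its points it carries at least three lines of $\Pi$; hence \eqref{eq:pps} holds in both of its forms. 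Yet the undeleted lines contained in $\Pi_0$ are precisely the pencil through $p$, so any three of them are concurrent and $\Pi_0$ carries no triangle of $\pps$. Your closing remark (``were all undeleted lines of $\Pi$ to pass through one point, no triangle could exist'') is therefore not a hypothetical worry but an actual configuration compatible with \eqref{eq:pps}: in it, the only undeleted line of $\Pi_0$ through your $a_2\neq p$ is $k_1$ itself, so no choice of flag helps.

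For comparison, the paper's own proof is the concurrence-blind version of your argument: it picks $a\in\Pi$, then $b\notin k_1$, then $c\notin k_1\cup k_2$, obtains undeleted lines $k_1,k_2,k_3$ of $\Pi$ through these points, and declares them a triangle without ever checking that $k_3$ avoids $k_1\cap k_2$ --- precisely the verification you refused to wave through. So your diagnosis is sharper than the source: with ``triangle'' as defined in the paper (pairwise adjacent and \emph{not concurrent}, which is also what the plane-recovery formula of \ref{lem:pps-planes} genuinely needs, since concurrent sides would let lines through the common vertex escape the plane), the lemma fails under \eqref{eq:pps} alone and needs a stronger hypothesis. The missing idea is thus not a cleverer choice of flags; it is a strengthening of the assumption that rules out the pencil configuration.
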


\begin{proof}
  Let $\Pi$ be a plane of $\fixproj$. Take a point $a$ on $\Pi$. By \eqref{eq:pps}
  there is a line $k_1\in\ppslines$ on $\Pi$ through $a$. Now, take a point $b$ on $\Pi$ 
  but not on $k_1$. Again by \eqref{eq:pps} there is a line $k_2\in\ppslines$ on $\Pi$
  through $b$. The lines $k_1, k_2$ meet each other. Finally, take a point $c$ on $\Pi$
  not on $k_1\cup k_2$. By \eqref{eq:pps} we have a line $k_3\in\ppslines$ through $c$ 
  that meets $k_1$ and so, it meets $k_2$ as well.
  The lines $k_1, k_2, k_3$ form the required triangle.  
\end{proof}

\begin{lem}\label{lem:pps-planes}
  Let\/ $\Pi$ be a plane of\/ $\fixproj$ and let\/ $k_1, k_2, k_3$ be the sides
  of a triangle of $\pps$ contained in $\Pi$. Then
  $$\Pi = \Bigl\{ a\in S\colon (\exists k\in\ppslines) \Bigl[\, a\in k 
    \Land \Bigl(\mathop{\wedge}\limits_{i=1}^3 k\cap k_i\neq\emptyset\Bigr) \,\Bigr]\Bigr\}.$$
\end{lem}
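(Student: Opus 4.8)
The plan is to establish the two inclusions separately, writing $R$ for the set on the right-hand side. I would first fix the vertices $a_1, a_2, a_3$ of the triangle so that $a_i \notin k_i$ and $a_i = k_j \cap k_l$ whenever $\{i,j,l\} = \{1,2,3\}$; since these are the vertices of a triangle, the sides $k_1, k_2, k_3$ have no point in common.

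For the inclusion $\Pi \subseteq R$ I would take an arbitrary $a \in \Pi$ and invoke \eqref{eq:pps}, in the same form as in the preceding lemma, to obtain a line $k \in \ppslines$ with $a \in k \subseteq \Pi$. Because $\Pi$ carries the structure of a projective plane and $k, k_1, k_2, k_3$ all lie on it, each of the pairs $k, k_i$ must meet, so $k \cap k_i \neq \emptyset$ for $i = 1,2,3$ and therefore $a \in R$.

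For the reverse inclusion $R \subseteq \Pi$ I would take $a \in R$, witnessed by a line $k \in \ppslines$ through $a$ with $k \cap k_i \neq \emptyset$, and choose points $p_i \in k \cap k_i$. The crucial observation is that $p_1, p_2, p_3$ cannot all coincide: a common value would lie in $k_1 \cap k_2 \cap k_3$, contradicting the non-concurrency of the sides of a triangle. Hence at least two of them, say $p_i \neq p_j$, are distinct; both lie on $k$ and in $\Pi$ (as $k_i, k_j \subseteq \Pi$), so $k$ shares two distinct points with the subspace $\Pi$ and is consequently contained in $\Pi$. Thus $a \in k \subseteq \Pi$, and combining the two inclusions yields $\Pi = R$.

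I expect the only delicate point to be the $R \subseteq \Pi$ direction, where the whole difficulty is concentrated in ruling out the degenerate case that $k$ meets all three sides in a single point; this is exactly what non-concurrency forbids, after which the defining subspace property of $\Pi$ closes the argument. Note that no cardinality hypothesis such as \eqref{eq:size-of-line} enters here, only the incidence properties of a projective plane.
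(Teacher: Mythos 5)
Your proof is correct and takes essentially the same route as the paper: the inclusion $\Pi\subseteq R$ is obtained exactly as there, by applying \eqref{eq:pps} on the plane $\Pi$ and using that any two lines of a projective plane intersect. The reverse inclusion, which the paper dismisses as ``straightforward,'' is precisely the argument you spell out---non-concurrency of the sides $k_1,k_2,k_3$ forces the witnessing line $k$ to have two distinct points in the subspace $\Pi$, hence $k\subseteq\Pi$---and it is correct.
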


\begin{proof}
  $\subseteq\colon$
  Let $a\in\Pi$. If $a$ lies on one of $k_1, k_2, k_3$ we are through. Otherwise,
  by \eqref{eq:pps} there is a line $k\in\ppslines$, through $a$ that meets $k_1$.
  On the projective plane $\Pi$ the line $k$ meets also $k_2, k_3$.
  
  $\supseteq\colon$
  Straightforward.
\end{proof}

\begin{cor}\label{cor:pps-planes}
  Planes of\/ $\fixproj$ are definable in terms of\/ $\pps$.
\end{cor}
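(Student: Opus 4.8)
The plan is to read the definability directly off the two preceding lemmas: the content of the corollary is that the family of planes of $\fixproj$ coincides with an internally describable family of point sets. First I would make precise what a \emph{triangle of\/ $\pps$} is in the language of $\pps$, namely a triple of lines $k_1, k_2, k_3\in\ppslines$ that are pairwise adjacent and not concurrent. By the definition of $\triangle$ from the Generalities this is a purely incidence-theoretic condition quantifying only over members of $\ppslines$. For any such triple I would form the set
$$\Pi(k_1,k_2,k_3) := \Bigl\{ a\in S\colon (\exists k\in\ppslines)\Bigl[\, a\in k \Land \Bigl(\mathop{\wedge}\limits_{i=1}^3 k\cap k_i\neq\emptyset\Bigr)\,\Bigr]\Bigr\},$$
whose defining formula ranges only over lines of $\ppslines$ and uses only the incidence of $\pps$; hence $\Pi(k_1,k_2,k_3)$ is definable in $\pps$.

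Next I would check that this family of sets is exactly the family of planes of $\fixproj$. In a projective space three pairwise-intersecting, non-concurrent lines are automatically coplanar and span a unique plane $\Pi$: the lines $k_1,k_2$ meet in a point and span $\Pi$, while $k_3$ meets each of them in two distinct points of $\Pi$ (distinctness being exactly non-concurrency), so $k_3\subset\Pi$ as well. With $k_1, k_2, k_3\subset\Pi$ a triangle of $\pps$, Lemma~\ref{lem:pps-planes} gives $\Pi(k_1,k_2,k_3)=\Pi$, so every such set is a genuine plane. Conversely, the preceding lemma guarantees that every plane $\Pi$ of $\fixproj$ carries a triangle of $\pps$, so each plane occurs as some $\Pi(k_1,k_2,k_3)$. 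Combining the two inclusions, the planes of $\fixproj$ are precisely the sets $\Pi(k_1,k_2,k_3)$ as $(k_1,k_2,k_3)$ ranges over the triangles of $\pps$, which is the desired definability.

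The only point needing care — and the main, if mild, obstacle — is confirming that the defining condition is genuinely internal to $\pps$ and that the correspondence is well defined independently of the chosen triangle. Well-definedness is immediate from Lemma~\ref{lem:pps-planes}, since its right-hand side depends on the triangle only through the plane it spans: any two triangles of $\pps$ lying on a common plane yield the same set $\Pi(k_1,k_2,k_3)$. Internality holds because every quantified line lies in $\ppslines$ and every incidence invoked is that of $\pps$, so no deleted line of $\rlines$ is ever referenced. Thus the planes of $\fixproj$ are definable in terms of $\pps$.
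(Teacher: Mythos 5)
Your proof is correct and takes essentially the same route as the paper, which states the corollary as an immediate consequence of the two preceding lemmas: Lemma~\ref{lem:pps-planes} supplies the internal formula for the plane spanned by a triangle of $\pps$, and the lemma before it guarantees every plane carries such a triangle. Your write-up merely makes explicit the (standard) coplanarity of three pairwise-intersecting non-concurrent lines and the well-definedness, both of which the paper leaves implicit.
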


Consider a line $l\in\rlines$ and take two planes $\Pi_1, \Pi_2$ of $\fixproj$
that contain $l$. The points on $l$ are not collinear in $\pps$
but they all are contained in both $\Pi_1$ and $\Pi_2$. This, in view of \ref{cor:pps-planes},
lets us define in terms of $\pps$ a ternary collinearity relation whose equivalence classes are the
deleted lines $\rlines$ (cf. \eqref{eq:impadjac}, \eqref{collinearity}, \eqref{eq:implines}).
Now the proof of \ref{thm:pps} is complete.

Nevertheless, it is worth to make a comment regarding the condition
\eqref{eq:pps}. By \ref{cor:pps-planes} all planes containing a line
$l\in\rlines$ are definable, although to recover $l$ two of them are
sufficient.  Hence, one can prove \ref{thm:pps} under an assumption weaker
significantly than \eqref{eq:pps}.  We need to require the existence of a few
points on every line, such that there are lines in $\ppslines$ through these
points, which yield two triangles in distinct planes (see Figure
\ref{fig:cond-star}).  This condition is not so intuitive and as straight as
\eqref{eq:pps}. In the next section we use \ref{thm:pps} to recover improper
lines in a maximal strong subspace of a Grassmann space. From this point of view
it is crucial to note, that all examples of complements in the Grassmann space,
considered in Section~\ref{sec:app}, not fulfilling \eqref{eq:pps} would not
satisfy the new assumption either. For this reason and for simplicity we
abandon the idea of a weaker, though more complex, assumption and continue our
research using \eqref{eq:pps}.

\begin{figure}[!h]
  \begin{center}
    \includegraphics[scale=0.4]{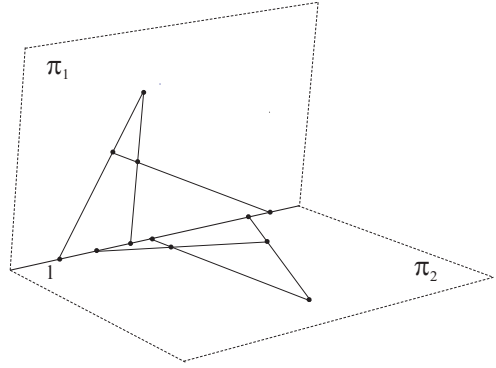}
  \end{center}
  \caption{A weaker version of the assumption \eqref{eq:pps}.}
  \label{fig:cond-star}
\end{figure}


\section{Complements in Grassmann spaces}\label{sec:complgrass}

Throughout this section we deal with the Grassmann space $\M = \PencSpace(V, k)$ 
introduced in the beginning, together with a fixed set $\W\subset\Sub_k(V)$. 
More precisely the complement $\fixoutf(\M,\W)$ is investigated.
Two assumptions seem to be essential here.
First of all we assume that the condition \eqref{eq:size-of-line} is satisfied, i.e.\ every
line of $\M$ has  size at least $2\indf(\M,\W) + 2$. To recover improper lines
we assume additionally that
\begin{equation}\label{eq:pps-grass}
  \begin{gathered}
    \text{\itshape if\/ $U$ is a point and $p$ is a line of\/ $\M$ such that}\\
    \text{$U\notin p$ and\/ $U,p$ span some strong subspace, then there is}\\
    \text{a proper maximal strong subspaces through\/ $U$ that meets $p$.}
  \end{gathered}
\end{equation}
Our goal now is to prove an analogue of \ref{thm:main-proj}:

\begin{thm}\label{thm:main-grass}
  If\/ $\M = \PencSpace(V, k)$ is a Grassmann space and $\W$ is its point subset
  satisfying \eqref{eq:size-of-line} and \eqref{eq:pps-grass},
  then both\/ $\M$ and\/ $\W$ can be recovered in the complement\/ $\fixoutf(\M,\W)$.
\end{thm}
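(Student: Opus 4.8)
The plan is to reduce the recovery problem for the Grassmann space $\M$ to the already-established recovery problem for projective spaces (Theorem~\ref{thm:main-proj}) and for partial projective spaces (Theorem~\ref{thm:pps}), exploiting the fact that every maximal strong subspace of $\M$ — a star or a top — carries the structure of a projective space. The first task is to show that maximal strong subspaces are recognizable inside the complement $\fixoutf(\M,\W)$. The relation of adjacency is already available, and the intrinsic distinction between stars and tops together with the definability of cliques and planes, mirroring Corollary~\ref{cor:planes} and Corollary~\ref{cor:star-directions}, should let us identify proper maximal strong subspaces purely in the internal language. Here assumption \eqref{eq:pps-grass} is what guarantees that each such subspace actually contains enough proper lines through each of its points so that the induced structure is a partial projective space satisfying \eqref{eq:pps}.

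Once a proper maximal strong subspace $\Q$ (a star or a top) is identified, I would observe that the lines of $\M$ lying inside $\Q$ that survive in the complement form exactly a partial projective space in the sense of Section~\ref{sec:pps}: the ambient projective geometry is $\Q$ itself, and the deleted lines are those $k$-pencils contained in $\W$. The translation of \eqref{eq:pps-grass} into the internal condition \eqref{eq:pps} on $\Q$ is the crucial verification, and Lemma~\ref{lem:top-star-top} (and its dual) is what ensures the planes, triangles and point-line incidences transfer coherently between different maximal strong subspaces sharing a point. Applying Theorem~\ref{thm:pps} then recovers, inside each such $\Q$, all the deleted $k$-pencils, i.e.\ the improper lines of $\M$ contained in $\Q$, strictly in terms of $\fixoutf(\M,\W)$.

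The final step is to reassemble the global structure of $\M$. Every line of $\M$ is a $k$-pencil and hence lies in some star and some top, so recovering all improper lines amounts to recovering them within each proper maximal strong subspace and checking that the reconstructions agree on overlaps. The point set $\Sub_k(V)$ is recovered exactly as in the projective case: proper points are the points of the complement, and improper points are reconstructed as the directions (equivalence classes under a suitable parallelism-type relation) determined by the recovered improper lines, using the coherence guaranteed by Lemma~\ref{lem:top-star-top}. Having recovered all points, all lines (proper and improper), and their incidence, we obtain $\M$ together with its horizon $\W$, which is precisely the assertion of the theorem.

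The main obstacle, I expect, will be the middle step: verifying that the internal assumption \eqref{eq:pps-grass} genuinely delivers condition \eqref{eq:pps} on each proper maximal strong subspace, and that the bijections $f$ of Lemma~\ref{lem:top-star-top} let the recovered improper lines in different stars and tops be glued into a single globally consistent family of $k$-pencils. Unlike the projective case, where a single ambient space organizes everything, here one must argue compatibility across a whole atlas of overlapping projective subgeometries, and this book-keeping — rather than any single hard incidence argument — is where the real work lies.
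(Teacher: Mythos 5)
Your overall plan---cover $\M$ by maximal strong subspaces, recover locally, then glue---is the paper's plan, but you have the two recovery theorems playing each other's roles, and this is not a cosmetic mix-up: it breaks the argument. Inside a \emph{proper} maximal strong subspace $X$, the induced structure $\fixoutf(\M,\W)|X = \fixoutf(X,{\W\cap X})$ is missing both points (those of $\W\cap X$) and lines, so it is a complement in the sense of Section~\ref{sec:proj}, \emph{not} a partial projective space in the sense of Section~\ref{sec:pps}, whose definition keeps the entire point set and deletes only lines. Hence Theorem~\ref{thm:pps} cannot be applied to $X$ as you propose (``the ambient projective geometry is $X$ itself, and the deleted lines are those $k$-pencils contained in $\W$'' ignores the deleted points); the correct tool there is Theorem~\ref{thm:main-proj} applied locally, which is the paper's Lemma~\ref{fact:rec-strong}, after observing that $\indf(X,{\W\cap X}) \le \indf(\M,{\W})$ so that \eqref{eq:size-of-line} transfers to $X$. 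Your route is also circular: to apply Theorem~\ref{thm:pps} you need all points of $X$, yet you plan to reconstruct the improper points afterwards, from the improper lines that this very application was supposed to produce.

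The second, more serious gap is the case your atlas never sees. You claim that recovering all improper lines ``amounts to recovering them within each proper maximal strong subspace,'' but an improper line $l$ may have \emph{both} of its maximal strong extensions $\starof(l)$ and $\topof(l)$ improper, and then $l$ lies in no proper maximal strong subspace at all; the paper notes explicitly that $\peki^\infty\setminus\peki^{\ast\infty}$ can be nonempty. This is precisely where Theorem~\ref{thm:pps} is needed, and the order of operations is essential: first recover all improper \emph{points}, locally via Theorem~\ref{thm:main-proj} in proper stars and tops, gluing the local horizons by the relation $\uzg$ of Propositions~\ref{prop:uzgadniacz} and \ref{prop:uzgadniacz1} (this gluing, not line-gluing, is what Lemma~\ref{lem:top-star-top} is really for), so that $\W = S^{\ast\infty}/\mathord{\uzg}$. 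Only after that does an improper star $\starof(l)$ become a projective space with all of its points available and only some lines missing---a genuine partial projective space in which \eqref{eq:pps-grass} yields \eqref{eq:pps}---and Theorem~\ref{thm:pps} then recovers the remaining lines, $l$ among them. Without this point-first, line-second ordering, and without handling the doubly-improper lines separately, the reconstruction cannot be completed.
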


Let us start with straightforward consequences of our assumptions.

\begin{lem}\label{lem:prop-line-thr}
  There is a proper line through every point of\/ $\M$.
\end{lem}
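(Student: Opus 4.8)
The plan is to distinguish two cases according to whether the point $U$ is proper or improper, the proper case being immediate from \ref{fact:basic}\eqref{fact:basic:proper} and the improper case resting on the assumption \eqref{eq:pps-grass}.

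First I would treat a proper point $U\notin\W$. Since $1<k<n-1$ one can pick $H\in\Sub_{k-1}(V)$ and $B\in\Sub_{k+1}(V)$ with $H\subset U\subset B$, so that $\penc(H,B)$ is a line of $\M$ through $U$. Every line through a proper point is proper by \ref{fact:basic}\eqref{fact:basic:proper}, so this line is already the one we want, with nothing further to prove.

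The substance lies in the improper case $U\in\W$, where the idea is to manufacture a proper maximal strong subspace through $U$ by means of \eqref{eq:pps-grass} and then extract a proper line inside it. To invoke \eqref{eq:pps-grass} I first need an auxiliary line $p$ with $U\notin p$ such that $U$ and $p$ span a strong subspace; I would obtain it by passing to a star through $U$. Choosing $H\in\Sub_{k-1}(V)$ with $H\subset U$ gives $U\in\starof(H)$, and the star $\starof(H)=[H)_k$ is a projective space of dimension $n-k$, which is at least $2$ precisely because $k<n-1$. Hence $\starof(H)$ is at least a projective plane and therefore carries a line $p$ with $U\notin p$; since $U$ and $p$ both lie in the strong subspace $\starof(H)$, the subspace they span is contained in $\starof(H)$ and is thus strong. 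Applying \eqref{eq:pps-grass} now produces a proper maximal strong subspace $\mathcal{Y}$ through $U$. Because $\mathcal{Y}\not\subseteq\W$ it contains a proper point $W_0$, and because $\mathcal{Y}$ is strong we have $U\adjac W_0$; the line $\LineOn(U,W_0)$ then passes through the proper point $W_0$ and through $U$, hence is proper again by \ref{fact:basic}\eqref{fact:basic:proper}.

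The only step requiring genuine care is verifying the hypotheses of \eqref{eq:pps-grass} in the improper case, that is, exhibiting the auxiliary line $p$ missing $U$; this is exactly where the inequality $k<n-1$ is used, guaranteeing that the star through $U$ has projective dimension at least $2$ and so admits lines avoiding any prescribed point. Everything else is routine and reduces to the single fact that a line through a proper point is proper.
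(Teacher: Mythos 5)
Your proposal is correct and follows essentially the same route as the paper: handle the proper case via \ref{fact:basic}\eqref{fact:basic:proper}, and in the improper case pass to a maximal strong subspace through $U$, pick a line $p$ avoiding $U$ inside it, invoke \eqref{eq:pps-grass} to obtain a proper maximal strong subspace through $U$, and join $U$ to a proper point of it. If anything, your write-up is slightly more careful than the paper's, which says ``take any line $p\subset X$'' without noting explicitly that $p$ must avoid $U$ and that such a line exists because the star (or top) has projective dimension at least $2$.
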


\begin{proof}
  Let $U$ be a point of $\M$. In case $U$ is proper every line through
  $U$ is proper. So, assume that $U$ is improper. Take any star or top $X$ 
  through $U$ and any line $p\subset X$. By \eqref{eq:pps-grass} there is 
  a proper maximal strong subspace $Y$ through $U$. So there is a proper
  point $U'\in Y$. As $U\neq U'$ the required line is $\LineOn(U, U')$.
\end{proof}

We used to make a distinction between objects that are contained in the
horizon $\W$ and those that are not. The latter are called proper and the
former improper. 
Following this convention we say that a subspace $X$ of $\M$
is \emph{proper} when $X\not\subset\W$, otherwise it is said to be \emph{improper}.

\begin{prop}\label{prop:stillstrong}
  \strut
  \begin{sentences}
  \item
  If\/ $X$ is a strong subspaces of\/ $\M$ then $X\setminus\W$ is a strong
  subspace of\/ $\fixoutf(\M,\W)$.
  \item
  If\/ $Y$ is a strong subspace of\/ $\fixoutf(\M,\W)$, then there is a strong subspace
  $X$ of\/ $\M$ such that $Y = X\setminus\W$.
  \end{sentences}
\end{prop}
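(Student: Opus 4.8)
The plan is to prove both directions by exhibiting the appropriate strong subspace explicitly and checking the defining property from the definitions in Section~2.

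For part (i), let $X$ be a strong subspace of $\M$ and set $Y := X\setminus\W = X\cap\pointsout$. I would first verify that $Y$ is a subspace of $\fixoutf(\M,\W)$: take a proper line $k$ sharing two distinct points with $Y$. Since these two points lie in $X$ and $X$ is a subspace of $\M$, the whole line $k$ (as a line of $\M$) is contained in $X$; restricting to proper points gives $k\cap\pointsout\subseteq Y$, and $k$ is a line of the complement because it is not contained in $\W$. Next I would check strongness: any two points of $Y$ are points of $X$, hence collinear in $\M$ by strongness of $X$; the joining $k$-pencil $p$ passes through two proper points, so by Fact~\ref{fact:basic}\eqref{fact:basic:proper} it is a proper line, i.e.\ a line of $\fixoutf(\M,\W)$, witnessing collinearity in the complement.

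For part (ii) I would recover $X$ from $Y$ by taking its closure in $\M$. Let $Y$ be a strong subspace of $\fixoutf(\M,\W)$ and put $X := \overline{Y}$, the subspace of $\M$ generated by $Y$ (the intersection of all $\M$-subspaces containing $Y$). The real content is that $Y$ already spans a \emph{strong} subspace of $\M$ and that passing to this span adds only improper points, so that $X\setminus\W = Y$. Because $Y$ is strong in the complement, any two of its points are joined by a proper $k$-pencil lying inside $Y$; in particular no two points of $Y$ lie in the ``non-collinear'' configuration of a star meeting a top described in Fact~\ref{fact:lineinstartop}. Using the Grassmann dichotomy that every strong subspace of $\M$ is contained in a single star or a single top, I would argue that $Y$ lies in one maximal strong subspace $M$ (a star or top) of $\M$, so its span $X$ does too and is therefore itself strong. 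Finally I would show $X\setminus\W = Y$: the inclusion $Y\subseteq X\setminus\W$ is immediate since $Y$ consists of proper points; for the reverse, a proper point $U\in X$ is collinear in $\M$ with points of $Y$ via proper lines (again Fact~\ref{fact:basic}\eqref{fact:basic:proper}), and the subspace property of $Y$ inside the complement forces $U\in Y$.

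The main obstacle is the second half of part (ii): showing that the $\M$-span of $Y$ contains no new \emph{proper} points, i.e.\ that $Y$ is already closed under taking proper lines within $\M$. The danger is a proper point $U\in\overline{Y}\setminus Y$ reachable only through improper intermediate points. I expect to handle this by working inside the single star or top containing $Y$, where $\M$ restricts to a projective space and Lemma~\ref{triangle-span} together with Fact~\ref{fact:basic}\eqref{fact:basic:min} guarantee enough proper points on each line to connect $U$ back to $Y$ by a chain of proper lines, each of which lies in the complement. Pinning down that every point of the generated subspace is either improper or already forced into $Y$ by the complement's subspace axiom is where the care is needed; the strongness of the ambient star or top is what makes this tractable, since there all the relevant joins are genuine $k$-pencils rather than the degenerate star-top pairs of Fact~\ref{fact:lineinstartop}.
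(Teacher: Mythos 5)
Part (i) of your proposal is correct and is essentially the paper's own argument; the paper spells out only the strongness of $X\setminus\W$ and leaves the subspace check implicit, which you supply. Nothing to add there.

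Part (ii) is where you genuinely diverge from the paper, and it is exactly at the crux of your route that there is a gap. The paper's proof is very short: since collinearity in $\fixoutf(\M,\W)$ implies collinearity in $\M$, the set $Y$ is a clique of the adjacency of $\M$ and hence lies in some star or top $X$; then every proper point $U\in X$ is joined to each point of $Y$ by lines that are proper by \ref{fact:basic}\eqref{fact:basic:proper}, and the paper concludes $U\in Y$ \emph{because $Y$ is a maximal clique of the complement} --- the proof silently treats $Y$ as maximal, which is the only case it needs afterwards (stars and tops of $\fixoutf(\M,\W)$). Your construction $X:=\overline{Y}$ instead targets arbitrary strong subspaces, and its reductions are sound: $Y$ lies in a star or top $M$ of $\M$, hence $\overline{Y}\subseteq M$ is strong, and $M$ is a projective space in which \eqref{eq:size-of-line} still holds (cf.\ \ref{fact:rec-strong}). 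But then everything rests on the equality $\overline{Y}\setminus\W=Y$, and there your stated justification is a non sequitur: that a proper $U\in\overline{Y}$ is collinear with points of $Y$ along proper lines triggers nothing, since the subspace axiom of the complement applies only to a proper line containing \emph{two} points of $Y$, and $U$ could a priori be generated from $Y$ only through improper intermediate points. You flag this obstacle yourself in the closing paragraph, but naming the obstacle and the plausible tools is not a proof; as submitted, the central step of part (ii) is unproved.

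The gap is closable with precisely the lemma you cite, by induction on the dimension of $\overline{Y}$ inside $M$: for two points of $Y$ the subspace axiom gives the proper part of their joining line; if $Y$ contains a proper triangle whose sides have their proper parts in $Y$, then \ref{triangle-span} yields, through every proper point $a$ of that plane, a proper line meeting two sides in two distinct proper points, i.e.\ in two points of $Y$, whence $a\in Y$, so $Y$ contains the whole proper part of the plane; and in the inductive step a proper point $x$ of the span of $Z\cup\{y\}$ (with $y\in Y$ and the proper part of $Z$ already in $Y$) lies in a plane through $y$ meeting $Z$ in a line, on which \ref{fact:basic}\eqref{fact:basic:min} supplies enough proper points to build such a triangle, forcing $x\in Y$ again. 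Had you carried this out, your argument would in fact establish the statement for \emph{all} strong subspaces $Y$, which is more than the paper's own proof does, since that proof covers only the maximal case; without it, your part (ii) stands on an unproven claim.
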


\begin{proof}
  (i)
  Let $U_1, U_2$ be distinct points of $X\setminus\W$. As $X$ is a strong subspace,
  there is a line $p$ in $\M$ that joins $U_1, U_2$. The line $p$ is proper since
  the points $U_1, U_2$ are proper.
  
  (ii)
  The set $Y$ is a clique of adjacency of $\M$.
  Take $X$ to be the maximal strong subspace of $\M$ containing $Y$.
  Note that $Y\subseteq X\setminus\W$ as $Y$ contains no points of the horizon $\W$.
  Now, consider a point $U\in X\setminus\W$. This is a proper point and it is
  collinear in $\M$ with every point of $Y$ as a subset of $X$. All the lines
  through $U$ are proper, so $U\in Y$ as $Y$ is a maximal clique of $\fixoutf(\M,\W)$.
\end{proof}

Note that from the inside of a Grassmann space it is not straightforward to tell whether
a strong subspace is a star or a top (in case $2k=n$ it is even impossible). 
We can make distinction between two types of
strong subspaces however, as two stars (and two tops) are either disjoint or meet
in a point while a star and a top are either disjoint or meet in a line. The
names `star' and `top' come from the outer space, usually a vector space or a
projective space, within which our Grassmann space is defined. So, in view of
\ref{prop:stillstrong}, we say
that a strong subspace $Y$ of $\fixoutf(\M,\W)$ is a star (a top) if there is a star 
(respectively a top) $X$ in $\M$ such that $Y = X\setminus\W$.

\begin{lem}\label{fact:rec-strong}
  Let\/ $X$ be a strong subspace of\/ $\M$, then
  $X$ and\/ $\W\cap X$ can be recovered in $\fixoutf(X,{\W\cap X}) = \fixoutf(\M,\W) | X$.
\end{lem}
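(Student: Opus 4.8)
The plan is to reduce the statement to Theorem~\ref{thm:main-proj}. A maximal strong subspace $X$ of $\M$---a star $\starof(H)$ with $H\in\Sub_{k-1}(V)$ or a top $\topof(B)$ with $B\in\Sub_{k+1}(V)$---carries the structure of a projective space, of dimension $n-k$ and $k$ respectively. Once I know that $\fixoutf(\M,\W)|X$ is the complement of $\W\cap X$ in this projective space, with the size hypothesis inherited, I can invoke the projective recovery theorem. So first I would check the asserted equality: the points of both structures are $X\setminus\W$, and a line of $X$ is a $k$-pencil $p\subseteq X$, for which $p\subseteq\W\cap X$ iff $p\subseteq\W$; hence the proper lines of $X$ with respect to $\W\cap X$ are exactly the proper lines of $\M$ lying in $X$, so $\fixoutf(X,\W\cap X)=\fixoutf(\M,\W)|X$ as claimed.

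Next I would transfer the hypotheses. Every line of $X$ is a line of $\M$, so by \eqref{eq:size-of-line} it has at least $2\indf(\M,\W)+2$ points; and since the improper points of a line are the same whether it is read in $X$ or in $\M$, we get $\indf(X,\W\cap X)\le\indf(\M,\W)$. Hence every line of $X$ carries at least $2\indf(X,\W\cap X)+2$ points, i.e.\ $X$ together with the horizon $\W\cap X$ satisfies \eqref{eq:size-of-line}. If $\indf(X,\W\cap X)=0$ then $\fixoutf(X,\W\cap X)=X$ and there is nothing to reconstruct; otherwise \eqref{eq:non-zero-ind} holds for $X$ as well. Consequently, whenever $\dim X\ge 3$, Theorem~\ref{thm:main-proj} applies to the projective space $X$ with horizon $\W\cap X$ and recovers both from $\fixoutf(X,\W\cap X)$, which is exactly the claim.

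The main obstacle is the planar case. Under $1<k<n-1$ a top has dimension $k$ and a star has dimension $n-k$, and either can equal $2$ (tops when $k=2$, stars when $k=n-2$; for $n=4$, $k=2$ \emph{every} maximal strong subspace is a projective plane). For such an $X$ neither Theorem~\ref{thm:main-proj} nor Theorem~\ref{thm:pps} is available, since the whole reconstruction ultimately rests on Lemma~\ref{lem:bundle}, whose proof uses a point off a given plane and so needs $\dim\ge 3$; in a plane the separating condition $(\ast)$ becomes vacuous---any two lines meet---and can no longer tell star directions from top directions. To treat a plane $X$ I would argue directly, writing $\lambda:=\indf(X,\W\cap X)$. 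By Lemma~\ref{lem:affine-lines} there are at least $\lambda+2$ proper lines through each improper point, and they form a $\parallel$-clique; this clique is maximal, because a transversal missing the point would meet those lines in at least $\lambda+2$ distinct improper points, whereas a line of $X$ carries at most $\lambda$. I would then identify the improper points with the maximal $\parallel$-cliques whose lines share a common point, separating them from the genuine \emph{triangle} cliques whose members meet pairwise in distinct improper points; improper lines would follow as in Section~\ref{sec:main}, with \eqref{eq:pps-grass} supplying the planar analogue of \eqref{eq:pps}. Making this separation of concurrent from triangular cliques internal to $\fixoutf(X,\W\cap X)$ in dimension two is where I expect the real difficulty to lie.
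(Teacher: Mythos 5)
Your first two paragraphs \emph{are} the paper's proof. The authors' entire argument reads: note that $\indf(X,{\W\cap X}) \le \indf(\M,{\W})$, hence every line of $X$ has size at least $2\indf(X,{\W\cap X})+2$, and since every strong subspace of $\M$ carries the structure of a projective space, apply Theorem~\ref{thm:main-proj} locally in $X$. Your version of this is the same reduction, only more careful: you actually verify the identification $\fixoutf(X,{\W\cap X})=\fixoutf(\M,\W)|X$ and you isolate the degenerate case $\indf(X,{\W\cap X})=0$, neither of which the paper spells out.

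Where you and the paper part ways is the planar case, and your objection is well founded: it points at a gap in the \emph{paper's} proof, not at a defect of your proposal relative to it. Theorem~\ref{thm:main-proj} is stated and proved only for $\dim\fixproj\ge 3$ (Lemma~\ref{lem:bundle} picks a point off a plane, and in a plane the condition $(\ast)$ holds vacuously, so star directions cannot be separated from top directions by it), yet under $1<k<n-1$ a top is a projective plane when $k=2$, a star is one when $k=n-2$, and for $n=4$, $k=2$ every maximal strong subspace is a plane. Since Step~I of Section~\ref{sec:complgrass} applies Lemma~\ref{fact:rec-strong} to \emph{all} proper stars and tops, the paper's two-sentence proof silently passes over exactly the case you isolate. (Read literally, for an arbitrary strong subspace, the lemma even fails --- if $X$ is a single line one cannot tell how many points were deleted --- so both you and the authors implicitly read ``maximal''; that makes the planar case unavoidable, not marginal.) Your own patch for planes is, as you admit, unfinished: the maximality argument for pencil-cliques is correct, but the internal separation of concurrent from non-concurrent cliques is the crux and is left open; a natural candidate is a counting comparison (members of a non-concurrent clique load each other with improper points, which bounds such cliques in terms of $\indf(X,{\W\cap X})$, while by Lemma~\ref{lem:affine-lines} a pencil-clique has at least $\indf(X,{\W\cap X})+2$ members), but this would have to be checked. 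Note also that inside a planar $X$ the device \eqref{collinearity} is unavailable --- an improper line of a plane is not an intersection of two proper planes \emph{of that plane} --- so the improper lines of a planar $X$ would in any case have to be recovered from the ambient Grassmann space, not internally. In short: on the ground the paper covers, your proof coincides with the paper's; beyond that, you have correctly identified an unaddressed case rather than resolved it.
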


\begin{proof}
  First of all, note that $\indf(X,{\W\cap X}) \le \indf(\M,{\W})$ and
  every line of $X$ has size at least $2\indf(X,{\W\cap X}) + 2$.
  Since every strong subspace $X$ of $\M$ carries the structure of a projective
  space, we can apply \ref{thm:main-proj} locally in $X$.
\end{proof}

Let $U\in\W$. Consider the family $M_U$ of all proper maximal strong subspaces through
$U$ in $\M$. Since every line of $\M$ is extendable to a star and to a top, in view of
\ref{lem:prop-line-thr} the set $M_U$ has 
at least two elements.
As we prove in Section~\ref{sec:main}, every improper point $U$ in a projective space
$X\in M_U$ can be uniquely identified with the bundle $\stardirin(U,X)$ (star direction) of parallel 
lines that pass through $U$ and are contained in $X$. 
Let 
\begin{cmath}
  S^{\ast\infty} := \Big\{\stardirin(U,X)\colon X\in M_U, \; U\in\W\Big\}.
\end{cmath}
We introduce the relation $\mathord{\uzg}\subseteq S^{\ast\infty}\times S^{\ast\infty}$ 
as follows:
in case $X_1, X_2$ are of different types (a top and a star up to ordering)

\begin{multline}\label{uzg:startop}
  \stardirin(U_1, X_1)\uzg \stardirin(U_2, X_2) :\iff 
    U_1, U_2 \in X_1\cap X_2 \Land \\
    \bigl(\exists p_1\in\stardirin(U_1, X_1)\bigr)\bigl(\exists p_2\in\stardirin(U_2, X_2)\bigr) 
    \bigl[\, p_1, p_2\neq X_1\cap X_2 \Land  \\
    \topof(p_1)\cap\starof(p_2) \text{ or } \starof(p_1)\cap\topof(p_2) 
\text{ is a proper line distinct from } X_1\cap X_2\,\bigr]
\end{multline}
and in case $X_1, X_2$ are of the same type (two stars or two tops)
\begin{multline}\label{uzg:starstar}
  \stardirin(U_1, X_1)\uzg \stardirin(U_2, X_2) :\iff 
    (\exists U\in\W)
    (\exists X \in M_U \\ \text{ of the other type than } X_1,X_2) 
      \bigl[\, \stardirin(U_1, X_1)\uzg \stardirin(U, X) \uzg \stardirin(U_2, X_2) \,\bigr].
\end{multline}
The relation $\uzg$ lets us identify improper points shared by a top and a star.

\begin{prop}\label{prop:uzgadniacz}
  Let $X_1, X_2$ be two proper maximal strong subspaces of different types in $\M$, and let
  $U_1,U_2\in \W$. The following conditions are equivalent:
  \begin{sentences}
  \item\label{first}
    $U_1=U_2\in X_1\cap X_2$,
  \item\label{second}
    $\stardirin(U_1, X_1)\uzg \stardirin(U_2, X_2)$.
  \end{sentences}
\end{prop}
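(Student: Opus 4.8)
The plan is to reduce everything to the mixed case and then translate both conditions into containments between the subspaces defining the stars, tops and pencils involved. Assume without loss of generality that $X_1=\starof(H)$ is a star and $X_2=\topof(B)$ is a top, with $H\in\Sub_{k-1}(V)$ and $B\in\Sub_{k+1}(V)$. Since $U_1,U_2\in X_1\cap X_2\neq\emptyset$, by \ref{fact:lineinstartop} the intersection $p_0:=X_1\cap X_2=\penc(H,B)$ is a line and every point $U\in p_0$ satisfies $H\subset U\subset B$. I would first note that any witnessing lines $p_1\in\stardirin(U_1,X_1)$ and $p_2\in\stardirin(U_2,X_2)$ are pencils of the form $\penc(H,B_1)$ and $\penc(H_2,B)$, so $\starof(p_1)=X_1$ and $\topof(p_2)=X_2$, whence $\starof(p_1)\cap\topof(p_2)=p_0$. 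Consequently, in \eqref{uzg:startop} only the disjunct concerning $\topof(p_1)\cap\starof(p_2)$ can ever give a line distinct from $X_1\cap X_2$; this fixes the shape of every admissible witness.

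For the implication \ref{second} $\Rightarrow$ \ref{first} I would unpack the witnesses as $p_1=\penc(H,B_1)$ and $p_2=\penc(H_2,B)$, so that $B_1\neq B$, $H_2\neq H$, $H\subset U_1\subset B_1$, and $H_2\subset U_2\subset B$. The hypothesis that $\topof(p_1)\cap\starof(p_2)=\topof(B_1)\cap\starof(H_2)$ is a proper line forces this intersection to be nonempty, hence $H_2\subset B_1$ and the line equals $\penc(H_2,B_1)$. A short dimension count then finishes it: from $U_1\subseteq B\cap B_1$ with $\dim U_1=k$ and $B\neq B_1$ we get $B\cap B_1=U_1$, and since $H_2\subset B$ and $H_2\subset B_1$ this gives $H_2\subset U_1$. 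As $H\neq H_2$ are distinct $(k-1)$-subspaces of the $k$-space $U_1$, it follows that $U_1=H+H_2$, and symmetrically $U_2=H+H_2$, so $U_1=U_2\in p_0=X_1\cap X_2$. This direction is purely computational.

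For the converse \ref{first} $\Rightarrow$ \ref{second}, writing $U:=U_1=U_2$, I would construct the witnesses explicitly. Since $X_1,X_2$ are proper I pick proper points $P_1\in X_1\setminus p_0$ and $P_2\in X_2\setminus p_0$; collinearity inside the strong subspaces makes $H_2:=P_2\cap U$ and $P_1+U$ well defined, and $p_2:=\penc(H_2,B)\ni P_2$ is then a proper witness with $H_2\neq H$ (otherwise $P_2\in p_0$). It remains to choose $B_1$ so that both $p_1:=\penc(H,B_1)$ and the forced line $\ell:=\penc(H_2,B_1)$ are proper. Setting $Y:=U+P_1+P_2$ makes the planes $[H,Y]_k$ and $[H_2,Y]_k$ proper, as they contain $P_1$ and $P_2$ respectively; letting $B_1$ run through the pencil of $(k+1)$-subspaces between $U$ and $Y$, of size at least $2\indf(\M,\W)+2$ by \eqref{eq:size-of-line}, and applying \ref{lem:affine-lines} (legitimate inside a strong subspace by \ref{fact:rec-strong}) bounds by $\indf(\M,\W)$ the choices making $\penc(H,B_1)$ improper and by $\indf(\M,\W)$ those making $\penc(H_2,B_1)$ improper. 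Discarding $B_1=B$ leaves an admissible $B_1$, for which $\topof(p_1)\cap\starof(p_2)=\ell$ is a proper line distinct from $p_0$, so \eqref{uzg:startop} holds. If $\dim Y=k+1$, then $P_2\in\penc(H_2,P_1+U)$ already and that single line serves directly as $\ell$.

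I expect the reverse implication to be the main obstacle: whereas \ref{second} $\Rightarrow$ \ref{first} is a routine subspace computation, \ref{first} $\Rightarrow$ \ref{second} requires three \emph{simultaneously} proper lines $p_1,p_2,\ell$, and the delicate point is to keep enough proper lines in a common pencil while avoiding the horizon. The device of taking $Y=U+P_1+P_2$ to force two auxiliary planes to be proper is exactly what allows the index bound \eqref{eq:size-of-line} to carry the counting; verifying the pencil identifications used in that count is the step demanding care. I would also isolate and treat by hand the low-dimensional degeneracies in which a star or a top collapses to a single line, where $\stardirin(U,X_i)$ contains no line other than $p_0$.
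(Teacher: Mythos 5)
Your proposal is correct, but it follows a genuinely different route from the paper's proof, in both directions. For \ref{second}$\Rightarrow$\ref{first} the paper argues synthetically by contradiction: if $U_1\neq U_2$, then \ref{cor:starcaptop} forces $X_1\cap X_2$ to lie in $\topof(p_1)$ or in $\starof(p_2)$, so a star and a top would share two distinct lines, contradicting \ref{fact:lineinstartop}; your dimension count ($B\cap B_1=U_1$, hence $U_1=H+H_2=U_2$) reaches the same conclusion by direct linear algebra and is, if anything, shorter. The real divergence is in \ref{first}$\Rightarrow$\ref{second}: the paper assumes all candidate lines $\topof(p_1)\cap\starof(p_2)$ are improper, transports the lines through $U$ lying in proper planes of $X_2$ through $X_1\cap X_2$ into planes of $\topof(p_1)$ via the bijection of \ref{lem:top-star-top}, finds a proper plane among the images using \ref{impintersec}, and exhibits a proper line with more than $\indf(\M,\W)$ improper points. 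You instead construct the witnesses explicitly and finish with a union bound inside the single pencil $[U,Y]_{k+1}$: at least $2\indf(\M,\W)+2$ choices of $B_1$, at most $\indf(\M,\W)$ bad for $p_1$, at most $\indf(\M,\W)$ bad for $\ell$, one discarded for $B_1=B$. This buys three things: the argument is constructive, it dispenses entirely with \ref{lem:top-star-top} (which the paper develops essentially for this proof), and it only ever counts inside planes contained in stars, which always exist since stars have projective dimension $n-k\ge 2$; the paper's family of planes through $X_1\cap X_2$ inside the top $X_2$ is plentiful only when $k\ge 3$, a subtlety your route sidesteps. Three small repairs are needed in your write-up. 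First, the existence of a proper point $P_1\in X_1\setminus p_0$ is not immediate from properness of $X_1$: take any proper point of $X_1$; if it lies on $p_0$, every line of $X_1$ through it is proper and carries at least $\indf(\M,\W)+2$ proper points, so some proper point lies off $p_0$. Second, the bound ``at most $\indf(\M,\W)$ improper lines through an improper point on a proper plane'' is what the \emph{proof} of \ref{lem:affine-lines} establishes, not what its statement asserts; the stated lower bound on affine lines does not cap the number of improper lines when lines are long, so quote or re-run that argument rather than the statement. Third, your final worry about a star or top collapsing to a single line is vacuous: under $1<k<n-1$ both have projective dimension at least $2$.
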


\begin{proof}
  Without loss of generality we can assume that $S := X_1$ is a star and
  $T := X_2$ is a top. 
  
  (i)$\implies$(ii):
  Let $U_1=U_2=:U$.
  As $U\in S\cap T$, by \ref{fact:lineinstartop},
  $l := S\cap T$ is a line of $\M$.
  From \ref{lem:bundle} there are lines $p_1\in\stardirin(U_1,S)$ and 
  $p_2\in\stardirin(U_2,T)$ distinct from $l$.
  Note that $U\in\topof(p_1)\cap\starof(p_2)$. Hence 
  $\topof(p_1)\cap\starof(p_2)$ is always a line $p$ in $\M$. Assume to the contrary 
  that for all $p_1, p_2\neq l$ every such a line $p$ is improper.
  
  Consider a line $p_1\in\stardirin(U, S)$ with $p_1\neq l$. Note that $U\in T\cap\topof(p_1)$.
  Take the set of proper planes 
    $${\cal F} = \{ \Pi\colon l\subset\Pi\subset T\}.$$
  If $l$ is proper, then all the planes containing $l$ are proper. If $l$ is improper, then
  we apply \ref{impintersec}. In both cases
  the cardinality of $\cal F$ is at least $\indf(\M,\W)+2$.
  
  Let $\Pi\in{\cal F}$. In view of
  our assumptions all of the lines $p=\starof(p_2)\cap\topof(p_1)$, for any line
  $p_2$ through $U$ contained in $\Pi$, are improper.
  By \ref{lem:top-star-top} these lines lie on the plane $f(\Pi)$ contained in $\topof(p_1)$.
  Now, the question is if any of planes $f(\Pi)$ is proper. To answer this question consider
  the set $f({\cal F})$. By \ref{lem:top-star-top}(ii) $f({\cal F})$ is the set of planes that share a single
  line $l'$. In case $l'$ is proper all the planes in $f({\cal F})$ are proper.
  In case $l'$ is improper, by \ref{impintersec}, there are proper planes in $f({\cal F})$.
  Any way we can take a proper plane $\Pi'\in f({\cal F})$.  
  Every line intersecting all of these lines on $\Pi'$ would have more than $\indf(\M,\W)$
  improper points, a contradiction.

  (ii)$\implies$(i):
  Assume to the contrary that $U_1\neq U_2$.
  Note, by \eqref{uzg:startop}, that $l := S\cap T$ is a line.
  From \eqref{uzg:startop} we have $U_1\in\topof(p_1)$, $U_2\in\starof(p_2)$, and $U_1\adjac U_2$. 
  By \ref{cor:starcaptop}, $l\subset\topof(p_1)$ or $l\subset\starof(p_2)$.
  Thus $\topof(p_1)=\topof(l)=T$ or $\starof(p_2)=\starof(l)=S$. 
  In both cases $S$ and $T$ share two distinct lines, a contradiction.
\end{proof}
We need to identify improper points common for two tops or two stars as well.

\begin{prop}\label{prop:uzgadniacz1}
  Let $X_1, X_2$ be two proper maximal strong subspaces of the same type in $\M$, and let
  $U_1,U_2\in \W$. The following conditions are equivalent:
  \begin{sentences}
  \item\label{first1}
    $U_1=U_2\in X_1\cap X_2$,
  \item\label{second1}
    $\stardirin(U_1, X_1)\uzg \stardirin(U_2, X_2)$.
  \end{sentences}
\end{prop}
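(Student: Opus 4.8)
The plan is to reduce Proposition~\ref{prop:uzgadniacz1} (same type) to the mixed-type case already settled in Proposition~\ref{prop:uzgadniacz}, exploiting the way the relation $\uzg$ was \emph{defined} for two subspaces of the same type in \eqref{uzg:starstar}: namely, $\stardirin(U_1,X_1)\uzg\stardirin(U_2,X_2)$ precisely when there is an intermediate improper point $U\in\W$ and a strong subspace $X\in M_U$ of the \emph{opposite} type linking the two via $\uzg$ on each side. So the whole argument is about manufacturing, or conversely recovering, such a bridging object $X$ of the other type.

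For the direction \eqref{first1}$\implies$\eqref{second1}, I would assume without loss of generality that $X_1,X_2$ are both stars (the top case is dual), set $U_1=U_2=:U\in\W\cap X_1\cap X_2$, and produce a top $X$ through $U$. Since $X_1,X_2$ are proper and contain the proper point structure guaranteed by \ref{lem:prop-line-thr} and \ref{lem:bundle}, I would pick a proper line $p$ through $U$ inside $X_1$; its top $\topof(p)$ passes through $U$ and is a candidate for $X$, but I must ensure it is \emph{proper}, which follows because $p$ is proper and $p\subset\topof(p)$. Taking $X:=\topof(p)\in M_U$ of the opposite type, I then apply the already-proven Proposition~\ref{prop:uzgadniacz} twice: once to the pair $(X_1,X)$ (a star and a top sharing $U$) to get $\stardirin(U,X_1)\uzg\stardirin(U,X)$, and once to $(X,X_2)$ to get $\stardirin(U,X)\uzg\stardirin(U,X_2)$. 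Chaining these through the definition \eqref{uzg:starstar} yields $\stardirin(U_1,X_1)\uzg\stardirin(U_2,X_2)$, as required.

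For the converse \eqref{second1}$\implies$\eqref{first1}, I would unwind \eqref{uzg:starstar}: the hypothesis hands me an improper $U\in\W$ and an opposite-type $X\in M_U$ with $\stardirin(U_1,X_1)\uzg\stardirin(U,X)$ and $\stardirin(U,X)\uzg\stardirin(U_2,X_2)$. Each of these two relations is now a mixed-type instance, so Proposition~\ref{prop:uzgadniacz} applies and forces $U_1=U\in X_1\cap X$ from the first and $U=U_2\in X\cap X_2$ from the second. Combining gives $U_1=U=U_2$ and $U_1=U_2\in X_1\cap X_2$, which is exactly \eqref{first1}. This direction is essentially a clean bookkeeping step once the mixed-type proposition is invoked, so it carries little risk.

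The main obstacle I anticipate is purely in the forward direction: guaranteeing that the bridging subspace $X$ of the other type can be chosen \emph{proper}, since $\uzg$ is only defined through proper maximal strong subspaces and the whole reconstruction lives in the complement $\fixoutf(\M,\W)$. This is where the assumptions must be used carefully --- the properness of $\topof(p)$ rests on choosing a genuinely proper line $p$ through $U$ (available by \ref{lem:bundle} applied inside the proper space $X_1$, which is legitimate because $X_1$ is a projective space to which the results of Section~\ref{sec:proj} apply), and I would double-check that $X_1,X$ actually share the point $U$ as required by the mixed-type proposition rather than merely being incident in some weaker sense. Once the existence of a proper opposite-type $X\ni U$ is secured, both implications close without further difficulty, and the proposition completes the identification of improper points common to two subspaces of the same type, which together with Proposition~\ref{prop:uzgadniacz} exhausts all cases.
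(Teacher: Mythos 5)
Your proposal is correct and follows essentially the same route as the paper: in the forward direction you bridge the two same-type subspaces through the proper opposite-type subspace $\topof(p)$ of a proper line $p$ through $U$ (the paper gets such a line from \ref{lem:prop-line-thr}), apply \ref{prop:uzgadniacz} twice and invoke \eqref{uzg:starstar}, and in the converse you unwind \eqref{uzg:starstar} and apply \ref{prop:uzgadniacz} twice again. The only cosmetic deviation is your insistence that $p$ lie inside $X_1$ via \ref{lem:bundle} applied locally (which strictly requires $X_1$ to be at least $3$-dimensional, not guaranteed for a star or top); this restriction is unnecessary, since any proper line through $U$ in $\M$ works, exactly as in the paper.
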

\begin{proof}
  We restrict our proof for two stars $S_1:=X_1$, $S_2:=X_2$, as the case involving two tops is dual.

  (i)$\implies$(ii):
  Let $U_1=U_2=:U\in S_1\cap S_2$. By \ref{lem:prop-line-thr} we take a proper line $l$ through $U$.
  Then the top $T(l)$ is proper and $U\in S_1\cap S_2\cap T(l)$. 
  Thus, from (i)$\implies$(ii) of \ref{prop:uzgadniacz} we obtain $\stardirin(U, S_1)\uzg \stardirin(U, T)$ and
  $\stardirin(U, T)\uzg \stardirin(U, S_2)$, that in view of \eqref{uzg:starstar} means 
  that  $\stardirin(U, S_1)\uzg \stardirin(U, S_2)$.

  (ii)$\implies$(i):
  Straightforward by (ii)$\implies$(i) of \ref{prop:uzgadniacz}.
\end{proof}

Recovering in \ref{thm:main-grass} goes in the following two steps:

\paragraph{Step I}

Applying results of Section~\ref{sec:main}, we recover the points of $\W$
and improper lines of $\M$ that are contained in proper  stars or tops. The
condition stated in \ref{lem:prop-line-thr} is critical here. 

In view of \ref{prop:uzgadniacz} and \ref{prop:uzgadniacz1} the relation $\uzg$ is an equivalence
relation. 
Thanks to \ref{lem:prop-line-thr} we have $M_U\neq\emptyset$ for
every improper point $U$ (cf. \ref{exm:neighbourhood}). This makes it possible to cover the complement
$\fixoutf(\M,\W)$ with the family of proper stars and tops. Now,
\ref{fact:rec-strong} can be applied for each member of that covering
to recover points of the horizon $\W$.
Therefore
\begin{cmath}
  \W = S^{\ast\infty}/\mathord{\uzg}.
\end{cmath}
Now, let us denote by $\peki^\infty$ the set of all improper lines of $\M$.
If $X$ is a proper maximal strong subspace of $\M$, then we write $\peki_X^\infty$
for the set of all improper lines in $X$.
Every line of $\peki_X^\infty$ can be defined as a section
of two proper planes, like it was done in \eqref{collinearity}, \eqref{eq:implines}.
Thus 
\begin{multline*}
  \peki^{\ast\infty} := \bigcup\Big\{
    [\pclique]_{\uzg}\colon \pclique = \stardirin(U,X),\; U\in l \in\peki_X^\infty,\\
        X\text{ is a proper maximal strong subspace of } \M \Big\}
\end{multline*}        
is the set of improper lines of $\M$ that are recoverable in $\fixoutf(\M,\W)$
by means of Section~\ref{sec:main}.

\paragraph{Step II}

Note that $\peki^\infty\setminus\peki^{\ast\infty}\neq\emptyset$, as
long as both $S(l)$ and $T(l)$ are improper for some line $l$.
We recover such lines applying results of Section~\ref{sec:pps}.
It would not be possible without assumption \eqref{eq:pps-grass} (cf. \ref{exm:twointervals}).

Let us focus on the improper star $S(l)$ which, due to dualism, 
does not cause loss of generality.
Points of $S(l)$ are recovered, as all points in $\W$ are already recovered.
So $S(l)$ can be considered as a projective space with some lines deleted, 
i.e. a partial projective space.
Observe that the assumption \eqref{eq:pps} is fulfilled in $S(l)$ by \eqref{eq:pps-grass}.
Hence, \ref{thm:pps} can be applied, and thus all the remaining unrecovered lines of $S(l)$ 
can be recovered now: the line $l$ in particular.
This way every line from the set
\begin{cmath}
  \peki^\infty\setminus\peki^{\ast\infty} = 
   \bigl\{l\in\peki^\infty\colon T(l),\; S(l) \text{ are improper}\bigr\} 
\end{cmath}
can be recovered in some improper star. 

Finally, we recover all lines from $\peki^\infty$,  which makes the proof of
\ref{thm:main-grass} complete.


\section{Applications}\label{sec:app}

\begin{exm}[Slit space]\label{exm:slit}
  Let $\fixproj$ be a projective space and $\W$ its subspace.
  It is clear that $\indf(\fixproj,\W) = 1$ and thus
  $2\indf(\fixproj,\W)+2 = 4$. So, if lines of $\fixproj$ have size at least 4 
  (i.e. the ground field is not $\GF(2)$) the condition \eqref{eq:size-of-line} 
  is satisfied. Hence we can apply \ref{thm:main-proj} and recover
  $\fixproj$ and $\W$ in the complement $\fixoutf(\fixproj,\W)$.
\end{exm}

\begin{exm}[Two-hole slit space]\label{exm:sulima}
  Again, let $\fixproj$ be a projective space, but now let $\W = \W_1\cup\W_2$, where
  $\W_1, \W_2$ are complementary subspaces of $\fixproj$ and none of $\W_1, \W_2$ is a hyperplane.
  A line of $\fixproj$, not entirely contained in $\W$, meets $\W$ in none, one or two points.
  In \cite{sulima} the complement ${\goth T} = \struct{\mathscr{S}, \mathcal{T}}$, where $\mathscr{S}$
  coincides with the point set of $\fixoutf(\fixproj,\W)$ while $\mathcal{T}$ is the set of lines
  that miss $\W$, i.e.\ those lines of $\fixoutf(\fixproj,\W)$ which have no improper points.
  So, the geometry of two-hole slit space ${\goth T}$ differs a bit from our complement $\fixoutf(\fixproj,\W)$
  in that the later has more lines. It has been proved in \cite{sulima} that 
  $\fixproj$ can be defined in terms of $\goth T$. In particular, all the lines 
  of $\fixoutf(\fixproj,\W)$ has been defined in $\goth T$. Consequently
  the complement $\fixoutf(\fixproj,\W)$ can be recovered in $\goth T$.
  If we additionally assume that lines of $\fixproj$ are of size at least 6, then \eqref{eq:size-of-line}
  is satisfied and we can apply \ref{thm:main-proj}.
  Therefore, in these settings $\fixproj$ and $\W$ can be recovered in $\fixoutf(\fixproj,\W)$.
  Though the result of \cite{sulima} is stronger, as there are no requirement on the size of lines, 
  our result embraces larger class of applications.
\end{exm}

\begin{exm}[Multi-hole slit space]\label{exm:multihole}
  Let us generalize the examples \ref{exm:slit} and \ref{exm:sulima} fixating
  a family $W_1,\dots, W_m$ of subspaces in the projective space $\fixproj$.
  Take $\W =\bigcup_{i=1}^m W_i$ and note that $\indf(\fixproj,\W)= m$ if there is a line 
  intersecting every of $W_i$ in precisely one point and all these points are pairwise distinct.
  Therefore, in general $\indf(\fixproj,\W)\leq m$.
  The minimal size of a line, that is required in order to apply \ref{thm:main-proj}, remains
  $2\indf(\fixproj,\W)+2$ and it is determined by arrangement of subspaces $W_1,\dots,W_m$.
  Such minimal size of a line varies from 4, as it is in the case of a slit space, up to $2m+2$.
\end{exm}

\begin{exm}[The complement of a quadric (i.e.\ of a polar space)]\label{exm:quadric}
  Now, let $\polarity$ be a polarity on a projective space $\fixproj$.
  In vein of linear algebra $\fixproj$, if it is desarguesian,
  corresponds to a projective space over a, say left, vector space over a division
  ring $F$ and the polarity $\polarity$ corresponds to a non-degenerate reflexive
  sesqui-linear form. Denote by $Q$ the set of all selfconjugate points w.r.t.\ $\polarity$
  i.e.\ the quadric determined by $\polarity$. It coincides with the point set of $\fixproj$
  when $\polarity$ is symplectic, or it is a specific case of a so called 
  \emph{quadratic set}, that is, a set of points with the property that every line 
  which meets $Q$ in at least three points is entirely contained in $Q$. 
  We are interested in the latter.
  So, $\indf(\fixproj,Q) = 2$ and if every line of $\fixproj$ has size at least 6
  ($F\neq\GF(2), \GF(3), \GF(4)$ in terms of linear algebra)
  the condition \eqref{eq:size-of-line} is satisfied. Therefore, in view of
  \ref{thm:main-proj} we see that $\fixproj$ and $Q$ can be recovered in the complement
  of $Q$ in $\fixproj$.
\end{exm}

\begin{exm}[The complement of a Grassmann subspace in a Grassmann space]\label{exm:interval}
  In \cite{slitgras} complements of interval substructures in Grassmannians where
  investigated. Such substructures are unique in that they bear the structure of 
  Grassmannians and only those have this property. In Grassmann spaces interval
  subspaces play an analogous role, as it was shown in \cite{correl}, and this is the
  reason to call them Grassmann subspaces. 
  So, consider a projective Grassmann space $\M = \PencSpace(V, k)$ and its
  interval subspace $\W := [Z, Y]_k$ for some subspaces $Z, Y$ of $V$.
  In our construction we require that $\W \neq\Sub_k(V)$, c.f. \ref{fact:basic}\eqref{fact:basic:notall},
  that is either $Z\neq\Theta$ ($\Theta$ being the trivial subspace of $V$), or
  $Y\neq V$.
  So, we have $\indf(\M,\W) = 1$ as $\W$ is a subspace of $\M$, hence for ground fields of $V$
  different from $\GF(2)$ the condition \eqref{eq:size-of-line} is satisfied.
  Observe that if $\W$ contains a maximal top, then $Z=\Theta$, 
  and if $\W$ contains a maximal star, then $Y=V$. Therefore, for every line
  $p$ of $\M$ one of its maximal strong extensions $\starof(p)$ or $\topof(p)$ is proper. 
  This means that \eqref{eq:pps-grass} holds true and thus $\M$ together with $\W$ can
  be recovered applying \ref{thm:main-grass}.
\end{exm}

\begin{exm}[The complement of a polar Grassmann space]\label{exm:polargras}
  Let $\A$ be a polar space embedded into some projective space $\fixproj$. 
  The polar Grassmann space $\PencSpace(\A, k)$ (cf. \cite{polargras}) is 
  embeddable into the projective Grassmann space $\M = \PencSpace(\fixproj, k)$. 
  Take $\W$ to be the point set of the former. Again
  $\indf(\M,\W) = 1$ or $\indf(\M,\W) = 2$ 
  as $\A$ is a null-system (i.e.\ $\W$ is a subspace of $\M$) or not, 
  respectively. So, if we drop the case where lines of
  $\fixproj$ are of size 3 or of sizes 3, 4, 5, respectively, the condition
  \eqref{eq:size-of-line} is fulfilled. Now, take a point $U$ and a line $p$ 
  of $\M$ such that $U\notin p$ and $U, p$ span a strong subspace. Note that
  every point on $p$ is adjacent to $U$. So, take a line $q$ through $U$ that meets $p$.
  The star $\starof(q)$ can not be contained in $\W$ (cf. \cite{polargras})
  which yields that the condition \eqref{eq:pps-grass} is fulfilled. Therefore,
  \ref{thm:main-grass} can be applied to recover $\M$ and $\W$.
  
  Note that in case $\A$ is a null-system this is an example of removing a
subspace from a Grassmann space,  likewise in Example~\ref{exm:interval}.
\end{exm}

So far we have seen examples where Theorem~\ref{thm:main-proj} or \ref{thm:main-grass} 
is applicable, sometimes under certain additional assumptions. The following examples 
show that the condition \eqref{eq:size-of-line} or \eqref{eq:pps-grass} is not always 
satisfied and our theorems cannot be applied.

\begin{exm}[Spine space]
  Following \cite{spinesp}, \cite{autspine} a spine space $\A:=\SpineSp(k,m,V,W)$,
  where $V$ is a vector space, $W$ its fixed subspace, and $k, m$ some fixed integers, 
  can be considered as the Grassmann space $\M = \PencSpace(V, k)$
  with the set of points 
  \begin{cmath}
    \W=\{ U\in\Sub_k(V)\colon \dim(U\cap W)\neq m\}
  \end{cmath}  
  removed. The lines of $\A$ are those lines of $\PencSpace(V, k)$ with at 
  least 2 proper points. 
  If $0 < m$, $m-1\neq k$, and $m-1\neq \dim(W)$, i.e.\ if there are
  affine lines in $\SpineSp(k,m-1,V,W)$, then each of these lines, from view of $\M$, 
  has a unique proper point.
  This contradicts \ref{fact:basic}\eqref{fact:basic:min} and means that
  the condition \eqref{eq:size-of-line} is not satisfied
  in spine spaces in general. 
  If however, $m=0$, as in the case of linear complements, or there are no
  affine lines in  $\SpineSp(k,m-1,V,W)$, then $\indf(\A,\W) = 1$ and, under
  assumption that the ground field of $V$ is not $\GF(2)$, the condition
  \eqref{eq:size-of-line} is fulfilled.
  The condition \eqref{eq:pps-grass} is problematic as it will be shown
  in \ref{exm:lincomp}.
  
  Nevertheless, in \cite{autspine} it was shown that with more sophisticated
  methods the ambient space $\M$ can always be recovered in $\A$.
  These methods consist in iterated, step by step, recovering the horizon
  of the horizon, treated as a spine space. 
\end{exm}

\begin{exm}[Space of linear complements]\label{exm:lincomp}
  Two papers \cite{blunck-havlicek} and \cite{lincomp} deal with the structure
  of complementary subspaces to a fixed subspace in a projective space
  $\fixproj$, though in completely different settings. This structure is
  embeddable into an affine space. Following \cite{lincomp} it arises as a specific
  case of a spine space, that is, we can think of it as of a Grassmann space
  $\M$, over a vector space $V$, with the set $\W$ of those points of
  $\M$ that are not linear complements of some fixed subspace $W$ of $V$.
  By \cite{lincomp}, \cite{spinesp}, or \cite{autspine} we have $\indf(\fixproj,\W) = 1$
  so, if we rule out $\GF(2)$ we have \eqref{eq:size-of-line} satisfied.
  If we take however, a point $U$ of $\M$ with $2\le\dim(U\cap W)$, then 
  there is no proper point $U'$ adjacent to $U$. This violates \ref{lem:prop-line-thr}
  and hence \eqref{eq:pps-grass} does not hold true.
\end{exm}

\begin{exm}[Affine Grassmann space]\label{exm:affinegr}
  Now let us start with a Grassmann space $\M = \PencSpace(V, k)$ with its geometric hyperplane $\W$
  removed. By \cite{cuypers} the complement of\/ $\W$ in $\M$ is an affine
  Grassmann space, or an affine Grassmannian $\A$.
  Assume that the ground field is not $\GF(2)$.
  In this case $\indf(\M,\W) = 1$ as every line of $\M$
  either meets $\W$ in a point, or is entirely contained in $\W$. Hence
  $2\indf(\M,\W)+2 = 4$ which means that the condition \eqref{eq:size-of-line}
  is fulfilled. 
  Following \cite{cuypers}, note however that $\W$ is the set of complementary
  $k$-subspaces to a fixed subspace $W$ of codimension $k$ in $V$, or it is the set
  of $k$-subspaces killed by some $k$-linear alternating form on $V$. In the first
  case we get the structure of linear complements as in  \ref{exm:lincomp}, so
  \eqref{eq:pps-grass} fails here either.
\end{exm}

The next two examples may seem artificial but they are intended to point out
the substance and necessity of the condition in \ref{lem:prop-line-thr} and 
of \eqref{eq:pps-grass} respectively, so they are minimal in the sense that 
$\W$ is possibly small while \ref{lem:prop-line-thr} or \eqref{eq:pps-grass} 
is not satisfied.

\begin{exm}[The complement of the neighbourhood of a point]\label{exm:neighbourhood}
  Consider a Grassmann space $\M = \PencSpace(V, k)$. Let $U$ be a point of
  $\M$. Take the set $\W$ of all points of $\M$ collinear with $U$. As far as
  adjacency $\adjac$ is concerned we can say that $\W$ is the
  \emph{neighbourhood} of $U$. Since $\M$ is a gamma space, $\W$ is a subspace of
  $\M$ (cf. \cite{cohen}). This implies that $\indf(\M,\W) = 1$ but there 
  are no proper lines through $U$, so the condition in \ref{lem:prop-line-thr}
  fails. This makes impossible to recover the horizon $\W$ pointwise.
\end{exm}

\begin{exm}[The complement of two large interval subspaces]\label{exm:twointervals}
  In a Grassmann space $\M = \PencSpace(V, k)$, where $1<k<n=\dim(V)$,
  consider a line $l=\penc(H, B)$ and its two extensions: the star $S = [H)_k$
  and the top $T = (B]_k$. 
  Let $U\in S$ with $U\notin l$. Every top through $U$ that intersects $l$ 
  intersects also $S$ in a unique line (a line through $U$ that intersects $l$).
  All these new lines span a plane $[H, U+B]_k$. 
  Hence, $\W = (U+B]_k\cup [H)_k$ does not satisfy \eqref{eq:pps-grass}.
  
  Dually, for a point $U\in T$ with $U\notin l$ we have a similar plane in the top 
  $T$, namely $[U\cap H, B]_k$. This time, $\W = (B]_k\cup [U\cap H)_k$  does not 
  satisfy \eqref{eq:pps-grass}.
\end{exm}

Note that \eqref{eq:pps-grass} fails in \ref{exm:twointervals} while
\ref{lem:prop-line-thr} is still valid. This example resembles \ref{exm:sulima} 
and \ref{exm:multihole} in that two or more subspaces are removed. 
Using our methods one can
recover the ambient projective space with any number of its subspaces removed,
while in Grassmann spaces removing only two subspaces can make recovery
impossible or at least seriously much more complex.

The examples above show that most of the time it is possible to recover ambient
projective space from a complement of its point subset as the only requirement is
the number of proper points on a line.
As projective Grassmann spaces can be covered by two families of projective spaces
it is tempting to utilize this and apply the forementioned result. It is not 
straightforward though. Our procedure relies on partial projective spaces and
this introduces a new requirement: the condition \eqref{eq:pps-grass}, which
turns out to be quite restrictive as many classical examples fail to satisfy it.
Maybe another approach would be more suitable in case of Grassmann spaces.



\begin{flushleft}
  \noindent
  K. Petelczyc, M. \.Zynel\\
  Institute of Mathematics, University of Bia{\l}ystok,\\
  Akademicka 2, 15-267 Bia{\l}ystok, Poland\\
  \verb+kryzpet@math.uwb.edu.pl+,
  \verb+mariusz@math.uwb.edu.pl+
\end{flushleft}

\end{document}